\pgfplotsset{compat=newest}
\newcommand{\err}{\mathbb{E}}
\newcommand{\R}{\mathbb R}
\newcommand{\bA}{\mathbf A}
\newcommand{\bH}{\mathbf H}
\newcommand{\bI}{\mathbf I}
\newcommand{\bP}{\mathbf P}
\newcommand{\bV}{\mathbf V}
\newcommand{\blf}{\mathbf f}
\newcommand{\bn}{\mathbf n}
\newcommand{\be}{\mathbf e}
\newcommand{\bp}{\mathbf p}
\newcommand{\bu}{\mathbf u}
\newcommand{\bU}{\mathbf U}
\newcommand{\bv}{\mathbf v}
\newcommand{\bw}{\mathbf w}
\newcommand{\bx}{\mathbf x}
\newcommand{\T}{\mathcal T}
\newcommand{\Div}{\mathop{\rm div}}
\newcommand{\divG}{{\mathop{\,\rm div}}_{\Gamma}}
\newcommand{\gradG}{\nabla_{\Gamma}}
\newcommand{\nablaG}{\nabla_{\Gamma}}
\newcommand{\OGamma}{\Omega^\Gamma_h}
\renewcommand{\div}{\textrm{div}\ \!}
\newcommand{\tr}{{\rm tr}}
\newcommand{\bxi}{\mbox{\boldmath$\xi$\unboldmath}}
\newcommand{\bpsi}{\mbox{\boldmath$\psi$\unboldmath}}
\newtheorem{assumption}{Assumption}[section]
\newtheorem{remark}{Remark}[section]
\begin{document}
\title{A penalty finite element method for a fluid system posed on embedded surface}
\author{
Maxim A. Olshanskii\thanks{Department of Mathematics, University of Houston, Houston, Texas 77204 (molshan@math.uh.edu); Partially supported by NSF through the Division of Mathematical Sciences grants 1522252 and 1717516.}
\and
Vladimir Yushutin\thanks{Department of Mathematics, University of Houston, Houston, Texas 77204 (yushutin@math.uh.edu)}
}
\maketitle
%\tableofcontents
%
\begin{abstract}
The paper introduces a finite element method for the incompressible Navier--Stokes equations posed on a closed surface $\Gamma\subset\R^3$. The method needs a shape regular tetrahedra mesh in $\R^3$ to discretize equations on the surface, which can cut through this mesh in a fairly  arbitrary way. Stability and error analysis of the fully discrete (in space and in time) scheme is given. The tangentiality condition for the velocity field on $\Gamma$ is enforced weakly by a penalty term. The paper studies both theoretically and numerically the dependence of the error on the penalty parameter. Several numerical examples demonstrate convergence and conservation properties of the finite element method.
\end{abstract}
\begin{keywords}
Surface Navier--Stokes problem; Fluidic membranes; Trace finite element method.
 \end{keywords}
%\begin{AMS} 65M06, 65D05
% \end{AMS}

\section{Introduction}

Fluid equations posed on manifolds naturally arise in mathematical models of lipid membranes, foams, emulsions and other thin material layers that exhibit surface fluidity and viscosity; see, e.g., \cite{baumgart2003imaging,dickinson1999adsorbed,fan2010hydrodynamic,scriven1960dynamics,slattery2007interfacial}.
Recently there has been a growing interest to  numerical simulation of fluid systems posed on surfaces~\cite{barrett2014stable,fries2017higher,nitschke2017discrete,nitschke2012finite,olshanskii2018finite,reuskenstream,reuther2015interplay,reuther2018solving,rodrigues2015semi}.
Due to its geometrical flexibility, finite element method is the most popular numerical approach for surface Darcy, Stokes, Navier--Stokes and coupled bulk--surface fluid problems. For example, papers \cite{fries2017higher,olshanskii2018finite,reuther2018solving} apply surface finite element methods ($P_1$-$P_1$ in \cite{reuther2018solving,olshanskii2018finite} and  Taylor--Hood elements in \cite{fries2017higher}) to discretize the incompressible surface Navier-Stokes equations in primitive variables on stationary manifolds. The authors of   \cite{nitschke2012finite,reuther2015interplay,reuskenstream} rewrite the governing equations  in vorticity--stream function variables, which are scalar quantities for  2D surfaces,  and further apply $P_1$ finite element methods to the resulting system. In \cite{barrett2014stable} a steady coupled bulk--surface Navier-Stokes system was also  treated by a finite element method. The present paper contributes to this emerging research field  with  stability and error analysis of a geometrically unfitted finite element method introduced here for the  Navier--Stokes equations of incompressible viscous surface fluid.

Discretization of fluid systems on manifolds brings up several difficulties in addition to those well-studied for finite element methods applied to equations posed in Euclidian domains. First, one has to approximation of covariant derivatives. The present paper exploits embedding of the two-dimensional  surface in $\R^3$ and makes use of tangential differential calculus; see, e.g., \cite{Jankuhn1,Gigaetal,scriven1960dynamics} for the derivation of surface fluid equations in terms of exterior differential operators in Cartesian coordinates. This allows us to avoid the use of intrinsic variables on a surface and makes implementation of the numerical method relatively straightforward in a standard finite element software. Next, in certain computational approaches -- for example, based on vorticity--stream function variables -- to formulate a finite element method, one has to recover surface curvatures, which is known to be a delicate numerical procedure, unless they are explicitly available through surface parametrization. The present method does not need this information and is capable of handling systems posed on implicitly defined surfaces.
Another difficulty stems from the need to recover a \emph{tangential} velocity field on a surface $\Gamma$.    It is not straightforward to build a finite element method which is conformal with respect to this tangentiality condition. In fact, the subspace of finite element velocity functions $\bu_h$ satisfying $\bu_h\cdot\bn=0$ ($\bn$ is the normal vector field on $\Gamma$) may contain only zero elements (geometrical locking effect).
Two natural ways to enforce the condition in the numerical setting are either the use of Lagrange multipliers \cite{gross2017trace} or adding a penalty term to the weak variational formulation.
Following \cite{hansbo2016stabilized,hansbo2016analysis,Jankuhn1,olshanskii2018finite,reuther2018solving}, we shall enforce the tangential constraint weakly with the help of a penalty term. Finally, one has to deal with geometric errors originating from approximation of $\Gamma$ by a ``discrete'' (e.g. polygonal) surface $\Gamma_h$ or, more general, from inexact integration of finite element bilinear forms over $\Gamma$. %Same remark applies to the approximation of the normal vector field $\bn$ by a ``discrete'' one $\bn_h$.
The effect of this geometric consistency on a  finite element  error for  surface vector Laplacian equation was studied in \cite{hansbo2016analysis}. For finite element exterior calculus approximation of the Hodge-Laplacian operator, the geometric consistency estimates were derived in \cite{holst2012geometric}. We do not address this issue here, assuming exact integration over $\Gamma$.

The present paper builds on the earlier work on the unfitted trace finite element method (TraceFEM) for PDEs posed on manifolds embedded in $\R^d$, $d=2,3$; in particular we exploit certain ideas found in \cite{burman2016cutb,grande2017higher,gross2017trace,olshanskii2018finite,ORG09}. The method uses shape regular surface-independent background triangulation and allows a surface or a curve to cut through this triangulation in an arbitrary way.
The choice of the geometrically  unfitted discretization  is motivated by the ultimate goal of numerical simulation of fluid flows on \emph{evolving} surfaces $\Gamma(t)$~\cite{Jankuhn1,Gigaetal,yavari2016nonlinear}. Unfitted discretizations, such as TraceFEM, allow to avoid mesh reconstruction for the time-dependent geometry and
to treat implicitly defined surfaces. As illustrated, for example, in \cite{lehrenfeld2018stabilized},
TraceFEM works very well for scalar PDEs posed on evolving surfaces, including cases where $\Gamma(t)$ undergoes topological changes, and it can be naturally combined with the level set method for implicit surface representation.
%This makes particular convenient the treatment of PDEs  on implicitly defined and evolving surfaces, since mesh reconstruction and fitting is not required~\cite{lehrenfeld2018stabilized,olshanskii2014eulerian,olshanskii2017trace1}.
%This motivates our choice of the unfitted discretization technique given the ultimate goal of numerical simulation of fluid flows on time-dependent surfaces $\Gamma(t)$~\cite{Jankuhn1,Gigaetal,yavari2016nonlinear}.

The paper presents a complete error analysis of the TraceFEM for time-dependent incompressible Navier-Stokes equations on a steady surface. Previous numerical analyses of fluid and related systems on manifolds include
error analysis of fitted and unfitted finite element methods for surface vector-Laplacian problems in \cite{hansbo2016analysis} and \cite{gross2017trace}, respectively,  as well as the error analysis of $P_1-P_1$ TraceFEM for the steady Stokes problem in~\cite{olshanskii2018finite}. Thus, the novelty here is the analysis of a time-dependent fluid system and the inclusion of inertia terms. Furthermore, we allow the surface to have non-trivial vector fields of infinitesimal rigid transformations. The corresponding velocity vector fields belong to the kernel of the viscous term and so the PDE system is not dissipative on the whole  space of divergence free tangential velocities, but only on a subspace. The finite element method preserves the corresponding property  only approximately, and handling it  requires some less standard considerations.

The remainder of the paper is organized in four sections. In section~\ref{s_cont} we recall some elementary notions of tangential calculus, introduce the surface incompressible Navier-Stokes equations and their weak formulation. We further derive energy balance and basic a priori estimates, which should be helpful in understanding expected properties of  the discrete problem. Section~\ref{s_TraceFEM} introduces the fully discrete finite element  formulation and discusses necessary implementation details. Stability and error analysis is the topic of section~\ref{s_error}. Here we prove an error estimate of optimal order in the energy norm. We also  track carefully the dependance of the error estimate on the penalty parameter. This reveals the optimal scaling of this parameter with respect to discretization parameters. Finally, section~\ref{s_num} presents results of a few numerical experiments, which illustrate the theory.

\section{Continuous problem}\label{s_cont}
Assume that $\Gamma$ is a closed sufficiently smooth surface in $\mathbb{R}^3$. The outward pointing unit normal on $\Gamma$ is denoted by $\bn$, and the orthogonal projection on the tangential plane is given by $\bP=\bP(\bx):= \bI - \bn(\bx)\bn(\bx)^T$, $\bx \in \Gamma$. In a neighborhood $\mathcal{O}(\Gamma)$  of $\Gamma$ the closest point projection $\bp:\,\mathcal{O}(\Gamma)\to \Gamma$ is well defined.  For a scalar function $p:\, \Gamma \to \mathbb{R}$ or a vector function $\bu:\, \Gamma \to \mathbb{R}^3$  we define $p^e=p\circ \bp\,:\,\mathcal{O}(\Gamma)\to\mathbb{R}$, $\bu^e=\bu\circ \bp\,:\,\mathcal{O}(\Gamma)\to\mathbb{R}^3$,   extensions of $p$ and $\bu$ from $\Gamma$ to its neighborhood $\mathcal{O}(\Gamma)$ along the normal directions.
%On $\Gamma$ it holds $\nabla p^e= \bP\nabla p^e$  and  $\nabla\bu^e=\nabla\bu^e\bP$, with $\nabla \bu:= (\nabla u_1~ \nabla u_2 ~\nabla u_3)^T \in \mathbb{R}^{3 \times 3}$ for vector functions $\bu$.
The surface gradient and covariant derivatives on $\Gamma$ are then defined as $\nablaG p=\bP\nabla p^e$ and  $\nabla_\Gamma \bu:= \bP \nabla \bu^e \bP$. The definitions  of surface gradient and covariant derivatives are  independent of a particular smooth extension of $p$ and $\bu$ off $\Gamma$.
%As noted above
%The reason why we consider normal extensions is because they are  convenient for the error analysis.
% In the remainder  this locally unique extension $\bu^e$ to a small neighborhood  of $\Gamma$ is also denoted by $\bu$.
On $\Gamma$ we consider the surface rate-of-strain tensor \cite{GurtinMurdoch75} given by
\begin{equation} \label{strain}
 E_s(\bu):= \frac12 \bP (\nabla \bu +\nabla \bu^T)\bP = \frac12(\nabla_\Gamma \bu + \nabla_\Gamma \bu^T).
 \end{equation}
We also define the surface divergence operators for a vector $\bu: \Gamma \to \R^3$ and
a tensor $\bA: \Gamma \to \mathbb{R}^{3\times 3}$:
%\begin{align*}
\[
 \divG \bu  := \tr (\gradG \bu), \qquad
 \divG \bA  := \left( \divG (\be_1^T \bA),\,
               \divG (\be_2^T \bA),\,
               \divG (\be_3^T \bA)\right)^T,
               \]
%\end{align*}
with $\be_i$ the $i$th basis vector in $\R^3$.

For a given  force vector $\mathbf{f} \in L^2(\Gamma)^3$, with $\mathbf{f}\cdot\bn=0$, we consider the following surface Navier--Stokes problem:
Find a vector field $\bu:\, \Gamma \to \R^3$, with $\bu\cdot\bn =0$, and $p:\,\Gamma \to \R$ such that
\begin{align} %\label{strongform}
  \rho\left(\frac{\partial\bu}{\partial t}+(\nabla_\Gamma \bu)\bu\right) - \nu\bP \divG (E_s(\bu))+\nabla_\Gamma p &=  \blf \quad \text{on}~\Gamma,  \label{strongform-1} \\
  \divG \bu & =0 \quad \text{on}~\Gamma. \label{strongform-2}
\end{align}
Here $\bu$ is the tangential fluid velocity, $p$  the surface fluid pressure, $\rho$ and $\nu$ are density and viscosity coefficients  and $\dot{\bu}=\frac{\partial\bu}{\partial t}+(\nabla_\Gamma \bu)\bu$ is the full time derivative, i.e. derivative along material trajectories of surface particles. We further assume $\nu$, $p$ and $\blf$ re-scaled so that $\rho=1$.
%The non-zero source term $g$ in problem \eqref{strongform-1}-\eqref{strongform-2}is included to facilitate the  treatment of evolving fluidic interfaces as a next research step. In that case, the inextensibility condition reads $\divG \bu_T =-u_N\kappa$, where $\kappa$ is the mean curvature and

\begin{remark} \label{remLaplacian} \rm
The operator $ \bP \divG E_s(\cdot)$ in equation~\eqref{strongform-1} models surface diffusion, which
is a key component in modeling Newtonian surface fluids and fluidic membranes \cite{GurtinMurdoch75,scriven1960dynamics}.
In the literature, there are different formulations of the surface Navier--Stokes equations,
some of which are formally obtained by substituting Cartesian differential operators by their geometric counterparts.  These formulations may involve different surface Laplace type operators, e.g., Bochner or Hodge--de Rham Laplacians. We refer to \cite{Jankuhn1} for a brief overview of different formulations of  the surface Navier--Stokes equations.
%found in the literature.
\end{remark}
\smallskip

%For the weak formulation of problem \eqref{strongform-1}-\eqref{strongform-2}, we introduce

\subsection{Weak formulation}\label{s_weak} We assume $\Gamma$ is at least $C^2$ smooth and compact.
Further $(\cdot,\cdot)$ and $\|\cdot\|$ denote $L^2(\Gamma)$ inner product and norm.
In what follows, we  need both general and tangential vector fields on $\Gamma$. Hence, we consider the space $\bV:=H^1(\Gamma)^3$ with norm
 \begin{equation} \label{H1norm}
  \|\bu\|_{1}^2:=\|\bu\|^2 + \|\nabla_
 \Gamma\bu\|^2,
 \end{equation}
and its  subspaces
\begin{equation}   \label{defVT}
 \bV_T:= \{\, \bu \in \bV~|~ \bu\cdot \bn =0\,\},\quad E:= \{\, \bu \in \bV_T~|~ E_s(\bu)=\mathbf{0}\,\}.
\end{equation}
For $\bu \in \bV$ we define the orthogonal decomposition into tangential and normal parts:
\begin{equation}\label{u_T_N}
\bu = \bu_T + u_N\bn,\quad \bu_T\in \bV_T.
\end{equation}
We use the notation from \eqref{u_T_N} further in the text.
Subspace $E$ from \eqref{defVT} spans all  infinitesimal rigid transformations -- also known as Killing vector fields -- that may exist for $\Gamma$. $E$ is a closed subspace of $\bV_T$ and $\mbox{dim}(E)\le 3$ for a two-dimensional manifold; see, e.g.~\cite{sakai1996riemannian}.
We define the Hilbert space $\bV_T^0$ as an orthogonal complement of  $E$ in $\bV_T$.
We also define $L_0^2(\Gamma):=\{\, p \in L^2(\Gamma)~|~ \int_\Gamma p\,dx=0\,\}$.

Consider the bilinear forms (with $A:B={\rm tr}\big(AB^T\big)$ for  $A,B\in\mathbb{R}^{3\times3}$)
\begin{align}
c(\bw,\bu,\bpsi)&=\int_{\Gamma} \left((\nabla_\Gamma\bu)
\bw\right)\cdot \bpsi\,\mathrm{d}\bx, \quad \bw,\bu,\bpsi \in \bV\label{defblfc}  \\
a(\bu,\bv)& = \int_\Gamma E_s(\bu):E_s(\bv) \, ds, \quad \bu,\bv \in \bV, \label{defblfa} \\
b(\bu,p) &= - \int_\Gamma p\,\divG \bu \, ds,  \quad \bu \in \bV, ~p \in L^2(\Gamma). \label{defblfb}
\end{align}
%with $A:B={\rm tr}\big(AB^T\big)$ for  $A,B\in\mathbb{R}^{3\times3}$.
%We emphasize that in the definition of $b_T(\bu,p)$ only the \emph{tangential} component of $\bu$ is used, i.e., $b_T(\bu,p)=b_T(\bu_T,p)$ for all $\bu \in \bV$, $p\in L^2(\Gamma)$. This property motivates the notation $b_T(\cdot,\cdot)$ instead of $b(\cdot,\cdot)$. %We will further discuss this in Remark~\ref{remimplementation}.
%The bilinear form $a(\cdot,\cdot)$ is continuous on $\bV_T$.

A weak formulation of \eqref{strongform-1}--\eqref{strongform-2}  reads: Find $\bu\in L^2(0,T;\bV_T(\Gamma))$ with $\bu_t\in L^2(0,T;\bV_T'(\Gamma))$ and $p\in L^2(\Gamma\times(0,T))$ satisfying $\bu|_{t=0}=\bu_0$ on $\Gamma$ and
\begin{equation}\label{weak}
\langle\bu_t,\bpsi\rangle_{\bV_T\times\bV_T'} +c(\bu,\bv,\bpsi)+a(\bu,\bpsi)  -b(p,\bpsi)+b(q,\bu)=(\blf,\bpsi)
\end{equation}
for all $\bpsi\in \bV_T,$ $q\in L^2(\Gamma)$ and for a.e. $t\in[0,T]$.

For divergence free tangential vector fields, we find using integration by parts that the $c$-form is skew-symmetric in its second and third arguments:
\begin{equation} \label{c_skew}
c(\bu,\bv,\bpsi)=-c(\bu,\bpsi,\bv),\quad\forall~\bu,\bv,\bpsi\in\bV_T,~ \mbox{div}_\Gamma\bu=0.
\end{equation}

%To derive basic energy equalities and bounds, we assume for the rest of this section that the tangential flow is divergence free, i.e. $g=0$.
Testing \eqref{weak} with $\bpsi=\bu$, $q=-p$, and using \eqref{c_skew} we obtain the energy balance equality,
\begin{equation}\label{balance_eq}
\frac12\frac{d}{dt}\|\bu\|^2+\nu\|E_s(\bu)\|^2=(\blf,\bu),\quad a.e.~~ t\in[0,T].
\end{equation}
Also for any $\bv\in E$ we have
\begin{equation}\label{c_killing}
c(\bu,\bu,\bv)=-c(\bu,\bv,\bu)=-\int_{\Gamma} \bu^T(\nabla_\Gamma \bv)\bu\,ds=-\int_{\Gamma} \bu^T E_s(\bv)\bu\,ds=0.
\end{equation}
%Therefore, for the surface with non-trivial Killing fields, i.e. $\mbox{dim}(E)>0$, we can
Consider decomposition $\bu=\bu^0+\bu^e$,
$\bu^0\in\bV_T^0$,  $\bu^e\in E$ for all $t\in[0,T]$ and test \eqref{weak} with $\bpsi=\bu^e$, $q=-p$. Thanks to \eqref{c_skew} and \eqref{c_killing} we have
\[
\left.
\begin{split}
   c(\bu,\bu,\bu) & =0 \\
   c(\bu,\bu,\bu^e)&=0
\end{split}
\right\}\quad\Rightarrow\quad
c(\bu,\bu,\bu^0)=0.
\]
This and $E_s(\bu^e)=0$ yield  the energy  balance both for $\bu^e$ and $\bu^0$ parts of the solution,
\begin{equation}\label{balance_separate}
\frac12\frac{d}{dt}\|\bu^e\|^2=(\blf,\bu^e)\quad \text{and}\quad \frac12\frac{d}{dt}\|\bu^0\|^2+\nu\|E_s(\bu^0)\|^2=(\blf,\bu^0).
\end{equation}
We see that system is dissipative on $\bV_T^0$, but not on the whole space $\bV_T$, if $\mbox{dim}(E)>0$. %This will require certain care for the numerical analysis of the problem.
%Energy estimates follow from \eqref{balance_separate} by the standard argument and invokes Korn's inequality \eqref{korn} for $\bu^0$,
%\begin{equation}\label{energy}
%\begin{split}
%\|\bu^e(t)\|&\le \int_0^t\|\blf^e(s)\|\,ds\\
%\|\bu^0(t)\|^2+\nu\int_0^t\|E_s(\bu^0(s))\|^2\,ds&\le c_K\nu^{-1}\int_0^t\|\blf^0(s)\|^2\,ds,~~t\in[0,T],
%\end{split}
%\end{equation}
%where $\blf^e$ is the $L^2$ projection of $\blf$ in $E$ and $\blf^0$ is its complement.
The estimate
\[
\|\bu\|_{L^\infty(0,T; L^2(\Gamma))}\le \|\bu_0\|+2\,\|\blf\|_{L^1(0,T; L^2(\Gamma))}
\]
follows immediately from \eqref{balance_eq}.  To show a bound for the $L^2(0,T; \bV)$ norm of $\bu$,
we need the surface Korn inequality below.
%We shall also make use of the following surface Korn inequality and inf-sup property derived in \cite{Jankuhn1}.
 There exist $c_K >0$ such that
 \begin{equation} \label{korn}
\|E_s(\bv)\| \geq c_K \|\bv\|_{1} \quad \text{for all}~~\bv \in \bV_T^0;
 \end{equation}
see \cite{Jankuhn1}. Since $E$ is finite dimensional (and so all norms on $E$ are equivalent), inequality \eqref{korn} implies
 \begin{equation} \label{korn1}
\|\bv\|^2+ \|E_s(\bv)\|^2 \geq C_K \|\bv\|_{1}^2 \quad \text{for all}~~\bv \in \bV_T.
 \end{equation}
Now with the help of the Cauchy--Schwarz inequality we conclude from \eqref{balance_eq} that
\begin{multline*}
\|\bu(t)\|^2\le  \|\bu_0\|^2+2\int_0^t\|\blf(s)\|_{-1}\|\bu(s)\|_{1}\,ds~~\Rightarrow~\\
~\Rightarrow~~\int_0^t\|\bu(s)\|^2\,ds\le t\|\bu_0\|^2+2t\int_0^t\|\blf(s)\|_{-1}\|\bu(s)\|_{1}\,ds,
\end{multline*}
where $\|\cdot\|_{-1}$ is the dual norm for $\bV$-norm.
From \eqref{balance_eq} we also have
\[\nu\int_0^t\|E_s(\bu(s))\|^2\,ds\le \frac12\|\bu_0\|^2+\int_0^t\|\blf(s)\|_{-1}\|\bu(s)\|_{1}\,ds.\]
Therefore, thanks to \eqref{korn1}, we get
\begin{equation*}%\label{energy1}
\begin{split}
C_K^{-1}&\nu\int_0^t\|\bu(s)\|^2_1\,ds \le \nu\int_0^t(\|\bu(s)\|^2+\|E_s(\bu(s))\|^2)\,ds \\ &\le (\frac12+\nu t)\|\bu_0\|^2+(1+2t\nu)\int_0^t\|\blf(s)\|_{-1}\|\bu(s)\|_{1}\,ds\\
&\le(\frac12+\nu t)\|\bu_0\|^2+(1+2t\nu)^2C_K\nu^{-1}\int_0^t\|\blf(s)\|^2_{-1}\,ds +\frac{1}4 C_K^{-1}\nu\int_0^t\|\bu(s)\|_{1}^2\,ds.
\end{split}
\end{equation*}
After  cancellation, this implies
\begin{equation}\label{energy1}
\|\bu\|_{L^2(0,T; \bV)}\le c\,(\|\bu_0\|+ \|\blf\|_{L^2(0,T; \bV')}).
\end{equation}
A discrete counterpart of \eqref{energy1} will be important for the error analysis further in the paper.

\subsection{Some further useful results}
We shall also need the space
\[
\bV_\ast  :=\{\, \bu \in L^2(\Gamma)^3\,:\,\bu_T \in \bV_T,~u_N\in L^2(\Gamma)\,\},  \quad\text{with}~~
\|\bu\|_{V_\ast}^2:=\|\bu_T\|_{1}^2+\tau\|u_N\|^2,
\]
where we introduce parameter $\tau>0$ in the definition of the norm for the convenience of finite element analysis in section~\ref{s_error}.
The following embeddings are obvious:
\[
\bV_T\subset\bV\subset\bV_\ast\subset L^2(\Gamma)^3.
\]
One useful observation is that bilinear forms in \eqref{defblfc}--\eqref{defblfb} are well defined and continuous on the larger space $\bV_\ast$. To see this, one first notes the identity
%\begin{equation} \label{idfund}
$\nablaG\bu=\nablaG\bu_T + u_N \bH,$
%\end{equation}
for any $\bu\in\bV$, where $\bH := \nabla_\Gamma \bn$ is the shape operator (second fundamental form) on $\Gamma$. Hence, using $\bH=\bH^T$ we also get
\begin{equation} \label{idfund}
E_s(\bu)=E_s(\bu_T) + u_N \bH,\quad \div_\Gamma\bu=\div_\Gamma\bu_T + u_N\mbox{tr}(\bH).
\end{equation}
This identity allows to define $\nablaG\bu,E_s(\bu)\in L^2(\Gamma)^{3\times3}$ and $\div_\Gamma\bu\in L^2(\Gamma)$ for all  $\bu\in\bV_\ast$. Moreover, for a $C^2$ surface
$\|\bH\|_{L^\infty(\Gamma)}\le C<\infty$ and one shows with the help of Cauchy--Schwarz inequality,
\begin{align}\label{a_cont}
a(\bu,\bv)\le c\,\|\bu\|_{V_\ast}\|\bv\|_{V_\ast},\quad\bu\in {\bV_\ast},~\bv\in \bV_\ast,\\
\label{b_cont}
b(\bu,q)\le   c\,\|\bu\|_{V_\ast}\|q\|,\quad\bu\in {\bV_\ast},~q\in L^2(\Gamma).
\end{align}
Using $\bH=\bH^T$, $\bH\bn=0$, we also work out for the trilinear form,
\begin{equation} \label{c_cont}
\begin{split}
c(\bu,\bv,\bpsi)&=c(\bu_T,\bv_T,\bpsi_T)+ c(\bu_T,v_N\bn,\bpsi_T)\\
                &\le
                \|\bu_T\|_{L^4(\Gamma)}\|\nabla_\Gamma\bv_T\|_{L^2(\Gamma)}\|\bpsi_T\|_{L^4(\Gamma)}\\
                 &\quad + \|\bu_T\|_{L^4(\Gamma)}\|v_N\|_{L^2(\Gamma)}\|\bpsi_T\|_{L^4(\Gamma)}\\
                &\le
                \|\bu_T\|_{L^4(\Gamma)}\|\bv\|_{V_\ast}\|\bpsi_T\|_{L^4(\Gamma)}\\
                &\le c\|\bu_T\|_{1}\|\bv\|_{V_\ast}\|\bpsi_T\|_{1},
\end{split}
\end{equation}
where for the last inequality we used the embedding $H^1(\Gamma)\subset L^4(\Gamma)$.
A sharper estimate for the $c$-form follows from the  Gagliardo--Nirenberg inequality,
\begin{equation} \label{Ladyzh}
\|\bu_T\|_{L^4(\Gamma)}\le c \|\bu_T\|^{\frac12}\|\bu_T\|_{1}^{\frac12};
\end{equation}
see, e.g., \cite{ledoux2003improved}. We close this section noting that the following infsup condition
for $b(\bv,p)$ form  can be easily shown~\cite{Jankuhn1},
\begin{equation} \label{infsup}
 \sup_{\bv\in{\bV_T^0}}\frac{b(\bv,p)}{\|\bv\|_{1}}  \geq c_0 \|p\| \quad \text{for all}~~p\in L^2_0(\Gamma).
\end{equation}

\section{Finite Element Method}\label{s_TraceFEM}

For the discretization of the variational problem \eqref{weak} we apply the trace finite element approach (TraceFEM) introduced in \cite{ORG09} for elliptic equations on surfaces and extended in \cite{olshanskii2018finite} for the surface steady Stokes problem. The TraceFEM is a geometrically
unfitted discretization technique in a spirit of XFEM and cutFEM. Therefore it allows very flexible treatment of complex and implicitly defined surfaces.
To apply the method, we assume that $\Gamma$ is strictly contained in a polygonal domain $\Omega \subset \R^3$, which is our computational domain.
We consider a family $\{\T_h\}_{h >0}$ of shape regular tetrahedral tessellations  of $\Omega$.
The subset of tetrahedra that have a nonzero intersection with $\Gamma$ is collected in the set
denoted by $\T_h^\Gamma$. For the analysis of the method,  we assume $\{\T_h^\Gamma\}_{h >0}$ to be quasi-uniform with the characteristic mesh size  $h$.

The domain formed by all tetrahedra in $\T_h^\Gamma$ is denoted by $\OGamma$.
On $\T_h^\Gamma$ we use a standard finite element space of continuous functions that are piecewise-polynomial of degree $1$.
This so-called \emph{bulk finite element space} is denoted by $V_h$,
\[
V_h=\{v\in C(\OGamma)\,:\, v\in P_1(T)~\text{for any}~T\in\T_h^{\Gamma}\}.
\]
The numerical approach allows higher order polynomial spaces, and we comment in the text, where modifications are required for this. However, in this paper we analyse and experiment with the $P_1$ spaces.

%The trace FEM for fluid systems on surfaces requires additional stabilization terms defined in tetrahedra intersected by the surface.

%Another implementation aspect of TraceFEM that requires attention is the computation of integrals over the surface $\Gamma$ with sufficiently high  accuracy.
% In practice, $\Gamma$ can be defined implicitly as the zero level
%of a level set function and a parametrization of $\Gamma$ may not be available.
%{An easy way to compute approximation $\Gamma_h \approx \Gamma$ and the corresponding geometric errors} will also be discussed in Remark~\ref{remimplementation}. Below
%we use the exact extended normal $\bn=\bn^e$ and we {assume exact integration over} $\Gamma$.

%Consider  the volumetric spaces
%\[ \bU:=\{\, \bv \in H^1(\OGamma)^3~|~\bv|_{\Gamma} \in \bV_\ast\,\}, \quad Q:=H^1(\OGamma).
%\]
%Note that $\bu^e\in\bU$ and $p^e\in Q$ for the normal extensions of $\bu\in\bV_\ast$ and $p\in H^1(\Gamma)$.
The velocity and pressure finite element spaces are
\[
\bU_h:= (V_h)^3, \quad Q_h :=V_h\cap L^0_2(\Gamma).
\]
Restriction of a finite element function on $\Gamma$ is an element of $\bV_\ast$, i.e. it does not necessarily satisfy the  $\bu\cdot\bn=0$ condition.
It is not straightforward to build a finite element method which is conformal with respect to this tangentiality condition. As discussed in the introduction, we use a penalty method to enforce the tangentiality condition weakly.

To define the finite element method, we also need an extension $\bn^e$ of the normal vector from $\Gamma$ to $\OGamma$.
We choose $\bn= \nabla d$ in $\OGamma$, where $d$ is the signed distance function to $\Gamma$.
In practice, $d$ is often not available and thus we use approximations. This and other implementation details are reviewed in~section~\ref{s_impl}.
We introduce the following finite element bilinear forms:
\begin{align}
 a_h(\bu,\bv) &= \int_\Gamma E_s( \bu): E_s( \bv)\, ds+\tau \int_{\Gamma} u_N v_N \, ds + \rho_u \int_{\OGamma} (\nabla \bu \bn)\cdot (\nabla  \bv \bn) \, dx, \label{Ah} \\
s_h(p,q)& = \rho_p  \int_{\OGamma} \nabla p\cdot\nabla q \, dx, \label{sh}
\end{align}
with some real parameters $\tau>0$, $\rho_u\ge0$, $\rho_p\ge0$.
The forms are well defined for $p,q \in H^1(\OGamma)$, $\bu,\bv \in H^1(\OGamma)^3$.

Assuming a constant time step $\Delta t=\frac{T}{N}$,  we use the notation $\bu^k(\bx):=\bu(t^k, \bx)$, $t^k=k\Delta t$ and similar
for $p$.
The semi-implicit time discretization and the trace finite element method result in the following scheme:
Given $\bu^{k-1}_h,\bu^{k-2}_h\in\bU_h$, find $(\bu_h^k, p_h^k) \in \bU_h \times Q_h$ solving
\begin{equation} \label{discrete}
 \begin{aligned}
  (\left[\bu_h\right]_t^{k},\bv_h)+ a_h(\bu_h^{k},\bv_h)+ c^\ast(\widetilde\bu_h^{k},\bu_h^{k},\bv_h) + b(\bv_h,p_h^k) & =(\blf^{k},\bv_h)  \\
  b(\bu_h^{k},q_h)-s_h(p_h^{k},q_h) & = 0
 \end{aligned}
\end{equation}
for all  $\bv_h \in \bU_h$ and $q_h \in Q_h$, $k=2,3,\dots,N$. %Here we analyze the first order in time method with
%\[
%\left[\bu_h\right]_t^{k} =\frac{\bu_h^{k}-\bu_h^{k-1}}{\Delta t},\qquad \widetilde\bu_h^{k}=\bu_h^{k-1}
%\]
%Higher order in time variants are straightforward. In the numerical section we also
In this paper, we consider the second order method with
\begin{equation}\label{BDF2}
\left[\bu_h\right]_t^{k} =\frac{3\bu_h^{k}-4\bu_h^{k-1}+\bu_h^{k-2}}{2\Delta t},\quad \widetilde\bu_h^{k}=2\bu_h^{k-1}- \bu_h^{k-2}.
\end{equation}
For $k=1$, we set $\left[\bu_h\right]_t^{k} =(\bu_h^{k}-\bu_h^{k-1})/{\Delta t}$ and $\widetilde\bu_h^{k}=\bu_h^{k-1}$.
Following~\cite{Temam84} and other work on numerical analysis of incompressible fluid systems, we   explicitly skew-symmetrize the trilinear form,
\begin{equation}\label{formC}
c^\ast(\bw,\bu,\bv)=\frac12(c(\bw,\bu,\bv)-c(\bw,\bv,\bu)).
\end{equation}
Due to identity \eqref{c_skew}, this is a consistent modification.
%We experiment numerically using both $c^\ast$ and $c$ forms (and also rotational form) and find that the performance of $c^\ast$ and $c$ is largely the same; see Section~\ref{s_num}.
\smallskip

\begin{remark}[$a_h$-form]\rm The second term in the definition of $a_h$ penalizes the non-zero normal velocity component. The third (volumetric) term is the so-called volume normal derivative stabilization~\cite{burman2016cutb,grande2017higher}.
The term vanishes for the strong solution $\bu$ of equations~\eqref{strongform-1}--\eqref{strongform-2},
since one can always assume a normal extension of $\bu$ off the surface.
The inclusion of this term stabilizes the {resulting algebraic system}.
Indeed, if $\rho_u=0$, then for a natural nodal basis in $\bU_h$, small cuts of the background triangulation by the surface  may lead to arbitrarily small diagonal entries in the resulting matrix. The stabilization term in \eqref{Ah} eliminates this problem since for a suitable choice of $\rho_u$ it
allows to get control over the $L^2(\OGamma)$-norm of $\bv_h\in\bU_h$
by the problem dependent norm
\[
\left(|\Delta t|^{-1}\|\bv_h\|^2+|\bv_h|_a^2
\right)^{\frac12}\quad  \text{with}~~|\bv|_a^2= a_h(\bv,\bv).
\]
We note that other efficient stabilization techniques exist; see~\cite{burman2016cutb} and  the review in~\cite{olshanskii2017trace}.
\end{remark}
\begin{remark}[$s_h$-form]\rm
The bilinear form $s_h$ is introduced for finite element pressure stabilization. In addition to  stabilizing the nodal basis with respect to small element cuts, $s_h$-term also stabilizes the velocity--pressure pair against the violation of the inf-sup condition (the discrete counterpart of \eqref{infsup}). For this reason, both tangential and normal components of the pressure gradient (which together form the full gradient) are included in the definition of $s_h$.
For $P_1$--$P_1$ bulk finite elements used in this paper, the stabilization resembles
the well-known  Brezzi--Pitk\"{a}ranta stabilization~\cite{BP} for the planar Stokes problem. For higher order
elements, the pressure stabilization should be updated to preserve higher order accuracy. One way of doing this is to split
between normal and inf-sup (pressure--velocity) stabilizations
\begin{equation}\label{form_s}
s_h(p,q) = \rho_{p,1}\int_{\OGamma}  \frac{\partial p}{\partial\bn} \frac{\partial q}{\partial\bn} \, dx +  \rho_{p,2}  \int_{\OGamma} \kappa_h(\nablaG p)\,\kappa_h(\nablaG q) \, dx,
\end{equation}
where $\kappa_h$ is a suitably defined elementwise `fluctuation' operator; see, e.g., \cite{ganesan2008local} for the planar case. This or other possible ways to stabilize the  method for higher order finite element pairs will be studied elsewhere.
\end{remark}

Following the analysis for the surface Stokes problem~\cite{olshanskii2018finite}, we set
\begin{equation} \label{parameters}
\rho_p\simeq\rho_u\simeq h,
\end{equation}
which is a minimal possible stabilization from a wide range of acceptable parameters; see, \cite{burman2016cutb,grande2017higher} for the analysis of the normal stabilization for scalar problems.
We write $x\lesssim y$ to state that the inequality  $x\le c y$
holds for quantities $x,y$ with a constant $c$, which is \textit{independent of
$h$ and the position of $\Gamma$ over the background mesh}.
Similarly for $x\gtrsim y$, and $x\simeq y$
will mean that both $x\lesssim y$ and $x\gtrsim y$ hold.
%Note that a $\rho_p\simeq h$ scaling is consistent with the well-known $O(h^2)$ choice of the Brezzi--Pitk\"{a}ranta stabilization parameter in the usual (planar or volumetric) case. The additional $O(h)$ scaling comes from the fact that the second term in \eqref{form_s} is computed over the narrow volumetric domain rather than over the surface.

\subsection{Implementation details} \label{s_impl}
 \rm We discuss some implementation aspects of the trace finite element discretization \eqref{discrete}. In the bilinear forms $a_h(\cdot,\cdot)$, $c(\cdot,\cdot,\cdot)$ full gradients of the arguments are computed and next projection $\bP$ is applied.  These can be computed as in standard finite element methods. It is important for the implementation that in $a_h(\cdot,\cdot)$ and $c^\ast(\cdot,\cdot,\cdot)$  we do \emph{not} need derivatives of projected velocities, e.g. of $(\bu_h)_T$.  To avoid differentiation of $\bP$ in the $b$-form, we rewrite the bilinear form as $b(\bv_h,p_h) =  \int_\Gamma \nabla_\Gamma p_h \cdot \bv_h \, ds= \int_\Gamma (\bP \nabla p_h )\cdot \bv_h \, ds  $. This differentiation by parts is valid for $H^1$-conforming pressure finite element spaces, as used in this paper.  Implementation then only requires an approximation of $\bn_h \approx \bn$ and not of derivatives of $\bn$.

In the implementation of this method one typically replaces $\Gamma$
 by an approximation $\Gamma_h \approx \Gamma$ such that integrals over $\Gamma_h$
 can be efficiently computed. Furthermore, the exact normal $\bn$ is approximated by  $\bn_h \approx \bn $.
 In the literature on finite element methods for surface PDEs, this is standard practice.
 We will use a piecewise planar surface approximation $\Gamma_h$ with ${\rm dist}(\Gamma,\Gamma_h) \lesssim h^2$.
 If one is interested in surface FEM with higher order surface approximation, we refer to the recent paper \cite{grande2017higher}.
We assume a level set representation of $\Gamma$:
 \[
 \Gamma=\{\bx\in\mathbb{R}^3\,:\,\phi(\bx)=0\},
 \]
with some smooth function $\phi$ such that $|\nabla\phi|\ge c_0>0$ in a neighborhood of $\Gamma$.
 For the numerical experiments in section~\ref{s_num} we use a piecewise planar surface approximation:
\[
\Gamma_h=\{\bx\in\mathbb{R}^3\,:\, I_h(\phi(\bx))=0\},
\]
where $I_h(\phi(\bx))\in V_h$ is the nodal interpolant of $\phi$.
As for the  construction of suitable normal approximations $\bn_h \approx \bn$,
 several techniques are available in the literature.
 One possibility is to use $\bn_h(\bx)=\frac{\nabla \phi_h(\bx)}{\|\nabla \phi_h(\bx)\|_2}$,
 where $\phi_h$ is a finite element approximation of a level set function $\phi$ which characterizes $\Gamma$.
 This is technique we use in section~\ref{s_num}, where $\phi_h$ is defined as a $P_2$ nodal interpolant of $\phi$.
  Analyzing the effect of resulting geometric errors is beyond the scope of this paper.
%Our focus is on analyzing the TraceFEM \eqref{discrete}.

\section{Analysis}\label{s_error}
In this section, we present stability and error analysis of the finite element method \eqref{discrete}.
%Since there is no dissipation mechanism in \eqref{strongform-1}--\eqref{strongform-2}   for the surface rigid transformations given by Killing vector fields, for the analysis of numerical stability, we assume that the surface force $\blf$ does no work on such motion, i.e. we assume
%\begin{equation}\label{eq_work}
%  (\blf(t),\bv)=0\quad\forall~\bv\in E,\quad a.e.~ t\in[0,T].
%\end{equation}
%On the other hand,
We allow non-zero right hand side in the discrete incompressibility condition, i.e., we consider
\begin{equation}\label{discrete2}
 b(\bu_h^{k},q_h)-s_h(p_h^{k},q_h)  = g^k(q_h)
\end{equation}
instead of the second equation in \eqref{discrete}, where $g^k$ is a functional on $Q_h$. We need this generalization to properly handle certain consistency terms in the error analysis. For the analysis we also assume
\begin{equation}\label{penalty}
  \tau\gtrsim 1.
\end{equation}

\subsection{Numerical stability}
For the energy balance of the finite element method, we test \eqref{discrete} with $\bv_h=\bu_h^k$, $q_h=-p_h^k$. To handle the discrete time derivative~\eqref{BDF2}, we use the following polarization identity:
\[
4\Delta t(\left[\bu_h\right]_t^{k},\bu_h^k)=\|\bu_h^k\|^2+\|\widetilde\bu_h^{k+1}\|^2-(\|\bu_h^{k-1}\|^2+\|\widetilde\bu_h^{k}\|^2)
+|\Delta t|^4\|\left[\bu_h\right]_{tt}^{k-1}\|^2,
\]
with $\left[\bu_h\right]_{tt}^{k}=(\bu_h^{k+1}-2\bu_h^k+\bu_h^{k-1})/|\Delta t|^2$.
After simple calculations we get for $k=2,3,\dots,N$,
\begin{multline}\label{balance_h}
\frac1{4\Delta t}(\|\bu_h^k\|^2+\|\widetilde\bu_h^{k+1}\|^2)+\nu\|E_s(\bu_h^k)\|^2+\tau\|u_{h,N}^k\|^2 \\ +\underbrace{\frac{|\Delta t|^3}4\|\left[\bu_h\right]_{tt}^{k-1}\|^2+\rho_u\|(\bn \cdot \nabla) \bu_h^k\|_{L^2(\OGamma)}^2+\rho_p \|\nabla p_h^k\|_{L^2(\OGamma)}^2}_{\text{$O(\Delta t)$ and $O(h^2)$ dissipative terms}}\\=\frac1{4\Delta t}(\|\bu_h^{k-1}\|^2+\|\widetilde\bu_h^{k}\|^2)+(\blf^k,\bu_h^k)-g^k(p_h^k).
\end{multline}
An analogous equality with obvious modifications holds for $k=1$.
The discrete balance \eqref{balance_h} resembles \eqref{balance_eq} up to several dissipative terms. Note that the true solution of \eqref{strongform-1}--\eqref{strongform-2} is tangential to $\Gamma$ and there is no $u_N$-terms in \eqref{balance_eq}. For the finite element solution, we further show that the term $\tau\sum_{k=1}^N\Delta t\|u_{h,N}^k\|^2$ is of order $O(|\Delta t|^4+h^2+\tau^{-1}+h^4\tau)$.  So its contribution to energy dissipation is of the second order in space and time for the penalty parameter of order $h^{-2}$.
Other dissipative terms, which are not present in \eqref{balance_eq} (middle line  of \eqref{balance_h}), result from time stepping and stabilization procedures. There order with respect to discretization parameters is $O(|\Delta t|^3)$ for the first term %(can be easily increased up to $O(|\Delta t|^2)$ by using BDF2)
and $O(h^2)$ for the second and the third (to see this, note \eqref{parameters} and the extra scaling $O(h)$ resulting from the integration over the thin strip $\OGamma$).

To handle the source term $g^k(p_h^k)$ on the right hand side of \eqref{balance_h}, we need the dual norm to the one induced by the pressure stabilization term:
\[
\|g\|_{-s}:=\sup_{q\in Q_h}g(q)/\|q\|_{s},\quad\text{for}~g\in Q_h',\quad  \|q\|_{s}=s_h(q,q)^{\frac12};
\]
then it obviously holds,
%\begin{equation}\label{est_source}
$|g^k(p_h^k)|\le \|p_h^k\|_s\|g^k\|_{-s}\le \frac{1}{2}(\|p_h^k\|_s^2+\|g^k\|_{-s}^2).$
%\end{equation}
In the same way, we treat the forcing term $|(\blf^k,\bv_h^k)|\le \|\blf^k\|_{\bV_\ast'}\|\bv_h^k\|_{\bV_\ast}$. Note that we need a norm on the larger space $\bV_\ast$ for the analysis of the discrete problem comparing to the energy estimates in section~\ref{s_weak}. As a consequence of the Korn inequality \eqref{korn1} and \eqref{penalty}, the $\bV_\ast$ norm is controlled by the problem dependent norm:
\begin{equation}\label{korn_dis}
\|\bv\|_{\bV_\ast}\lesssim (|\bv|_a^2+\|\bv\|^2)^\frac12,\quad\text{for all}~\bv\in\bV_\ast.
\end{equation}

Multiplying \eqref{balance_h} by $4\Delta t$ and summing up for $k=1,\dots,n$ and treating the $g^k$ and $\blf^k$ terms as above we arrive on the following  estimate
\begin{equation}\label{energy_est1}
\|\bu_h^n\|^2+\sum_{k=1}^{n}\Delta t\left\{|\bu_h^k|_a^2+\|p_h^k\|_{s}^2\right\} \lesssim
\|\bu_h^0\|^2+\sum_{k=1}^{n}\Delta t\left\{\|\blf^k\|_{\bV_\ast'}\|\bu_h^k\|_{\bV_\ast}+\|g^k\|_{-s}^2\right\}.
\end{equation}
To estimate the $\bV_\ast$-- norms of $\bu_h^k$, we proceed as in the continuous case of section~\ref{s_weak}, with the only change that instead of \eqref{korn1} we use \eqref{korn_dis} and summation $\sum_{k=1}^{n}\Delta t$ in place of $\int_0^t$. These arguments lead to the estimate
\begin{equation}\label{energy_est2}
\sum_{k=1}^{n}\Delta t\|\bu_h^k\|_{\bV_\ast}^2 \lesssim
\|\bu_h^0\|^2+\sum_{k=1}^{n}\Delta t\left\{\|\blf^k\|_{\bV_\ast'}^2+\|g^k\|_{-s}^2\right\}.
\end{equation}
Next we apply the Cauchy--Schwarz inequality to the $\blf^k$-term in \eqref{energy_est1},
\begin{equation}\label{energy_est3}
\sum_{k=1}^{n}\Delta t\|\blf^k\|_{\bV_\ast'}\|\bu_h^k\|_{\bV_\ast}\le \sum_{k=1}^{n}\Delta t\|\blf^k\|_{\bV_\ast'}^2+\sum_{k=1}^{n}\Delta t\|\bu_h^k\|_{\bV_\ast}^2,
\end{equation}
and use \eqref{energy_est2} to estimate the second term on the right hand side. Thus \eqref{energy_est1}--\eqref{energy_est3} lead to our final numerical  stability estimate
\begin{equation}\label{energy_est}
\|\bu_h^n\|^2+\sum_{k=1}^{n}\Delta t\left\{\|\bu_h^k\|_{\bV_\ast}^2+\|p_h^k\|_{s}^2\right\} \lesssim
\|\bu_h^0\|^2+\sum_{k=1}^{n}\Delta t\left\{\|\blf^k\|_{\bV_\ast'}^2+\|g^k\|_{-s}^2\right\},
\end{equation}
for $ n=1,2,\dots,N.$
%numerical energy stability
We note that the norm $\|g^k\|_{-s}$ is mesh-dependent through  parameter $\rho_p$ in the definition of $s_h$ form. We admit the presence of such term on the right-hand side of the stability estimate for the following reason: For incompressible surface fluids, either we have $g^k=0$ or we apply \eqref{energy_est} for an equation with  a consistency term $g^k$, which scales with $h$ in a suitable way. Next, we analyse convergence of the method.  We start with consistency estimates.

\subsection{Consistency estimates} \label{sec:consist}
%For any smooth solution $\bu$, $q$ to \eqref{strongform-1}--\eqref{strongform-2} we assume normal extension to $\OGamma$.
Further we need $\Gamma\in C^3$ assumption, since we deal with normal extension of functions from $\Gamma$ to $\OGamma$ and we need the extended normal vector field to be at least from $C^2(\OGamma)$.
For the normal extension of a sufficiently smooth function $v$ defined on $\Gamma$,  the following estimates will be useful \cite{ORG09,reusken2015analysis}:
 \begin{equation}\label{normal}
\begin{split}
 h^\frac{1}{2}\|\nablaG v\|& \simeq \|\nabla v\|_{L^2(\OGamma)}, \quad \text{for all}~~v \in H^1(\Gamma), \\
h^{\frac12}\|v\|& \simeq \|v\|_{L^2(\OGamma)}, \quad \text{for all}~~v \in L^2(\Gamma),\\
  h^{\frac12} \|v\|_{H^2(\Gamma)}  &\gtrsim \|v\|_{H^2(\OGamma)} , \quad \text{for all}~~v \in H^2(\Gamma).
\end{split}
 \end{equation}
Applying the first estimate in \eqref{normal} componentwise and using that normal derivatives vanish, we also get for all $\bv \in H^1(\Gamma)^3$:
  \begin{equation}\label{normalv}
  \|\nabla\bv\|_{L^2(\OGamma)}\lesssim h^{\frac12}\|\bv\|_1.
 \end{equation}

Recall the notation $\bu^{k}=\bu(t^k)$, $p^k=p(t^k)$.
Testing \eqref{weak} with $\bpsi=\bP\bv_h|_\Gamma$ for $\bv_h\in\bV_h$ and $q=q_h$ for $q_h\in Q_h$,  we find that  $\bu$, $q$ satisfy
\begin{equation} \label{e:exid}
 \begin{aligned}
  (\left[\bu\right]_t^{k},\bv_h)+ a_h(\bu^{k},\bv_h)+ c^\ast(\widetilde\bu^{k},\bu^{k},\bv_h) + b(\bv_h,p^k) & = (\blf^k,\bv_h) +\text{consist}^k_u(\bv_h) \\
  b(\bu^{k},q_h)-s_h(p^{k},q_h) & =\text{consist}^k_p(q_h)
 \end{aligned}
\end{equation}
for all  $\bv_h \in \bU_h$ and $q_h \in Q_h$ with
\begin{align*}
 \text{consist}^k_u(v_h) :=
  & \hphantom{+}
    (\left[\bu\right]_t^{k}-\bu_t(t^k), \bv_h)
  +   c^\ast(\widetilde\bu^{k}-\bu^{k},\bu^{k},\bv_{h})+a_h(\bu^{k},v_{h,N}\bn)\\ &- \frac12c(\bu^{k},v_{h,N}\bn,\bu^{k})
    \\
\text{consist}^k_p(q_h):=
  &\hphantom{+} -s_h(p^{k},q_h).
\end{align*}
Note that $\bP\bv_h$ yields to $\bv_h$ in some forms, since $\bu_t(t^k)$, $\left[\bu\right]_t^{k}$ and $\blf^k$ are tangential to $\Gamma$ and $c(\bu^{k},\bu^{k},v_{h,N}\bn)=0$ holds. %The latter holds, because both forms involve only tangential derivatives of $\bu^k$.

For further analysis we need certain regularity for the solution to the surface Navier--Stokes system.
\begin{assumption}\label{A1}
The solution of \eqref{strongform-1}--\eqref{strongform-2} is such that
\begin{equation}\label{eqA1}
\begin{split}
\bu&\in L^\infty(0,T;H^{2}(\Gamma)^3),\quad
p\in L^\infty(0,T;H^1(\Gamma))\\ \frac{d^i\bu}{dt^i}&\in L^\infty(0,T;H^{1}(\Gamma)^3),~i=1,2,\quad \frac{d^3\bu}{dt^3}\in L^\infty(0,T;L^2(\Gamma)^3).
\end{split}
\end{equation}
\end{assumption}

\begin{lemma}\label{l_consist} Assume \eqref{eqA1},
then the consistency error has the bound
\begin{equation}\label{est:consist}
  |\text{\rm consist}^k_u(\bv_h)|\lesssim (|\Delta t|^2+\tau^{-\frac12})\,\|\bv_{h}\|_{V_\ast},
  \quad |\text{\rm consist}^k_p(q_h)| \lesssim h \|q_h\|_s,~ k\ge2.
\end{equation}
\end{lemma}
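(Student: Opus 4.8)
The plan is to bound each of the four terms in $\text{consist}^k_u(\bv_h)$ separately, and the single term in $\text{consist}^k_p(q_h)$, using the regularity from Assumption~\ref{A1}, the norm equivalences~\eqref{normal}--\eqref{normalv} between $\Gamma$ and $\OGamma$, and the continuity bounds~\eqref{a_cont}--\eqref{c_cont} for the forms on $\bV_\ast$.

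First, for the time-discretization term $(\left[\bu\right]_t^{k}-\bu_t(t^k),\bv_h)$: the BDF2 difference quotient~\eqref{BDF2} approximates $\bu_t(t^k)$ with truncation error $O(|\Delta t|^2)$ provided $\tfrac{d^3\bu}{dt^3}\in L^\infty(0,T;L^2(\Gamma)^3)$, which is exactly the last line of~\eqref{eqA1}; a Taylor expansion with integral remainder gives $\|\left[\bu\right]_t^{k}-\bu_t(t^k)\|\lesssim |\Delta t|^2\|\tfrac{d^3\bu}{dt^3}\|_{L^\infty(0,T;L^2)}$, and then Cauchy--Schwarz against $\bv_h$ together with $\|\bv_h\|\le\|\bv_h\|_{V_\ast}$ gives the $|\Delta t|^2\|\bv_h\|_{V_\ast}$ contribution. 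Next, for $c^\ast(\widetilde\bu^{k}-\bu^{k},\bu^{k},\bv_h)$: the extrapolation $\widetilde\bu^{k}=2\bu^{k-1}-\bu^{k-2}$ satisfies $\|\widetilde\bu^{k}-\bu^{k}\|_1\lesssim |\Delta t|^2\|\tfrac{d^2\bu}{dt^2}\|_{L^\infty(0,T;H^1)}$, again by Taylor expansion using $\tfrac{d^2\bu}{dt^2}\in L^\infty(0,T;H^1(\Gamma)^3)$; then the continuity estimate~\eqref{c_cont} (applied to $c^\ast$, which is a fixed linear combination of $c$'s) yields $|c^\ast(\widetilde\bu^{k}-\bu^{k},\bu^{k},\bv_h)|\lesssim \|\widetilde\bu^{k}-\bu^{k}\|_1\|\bu^{k}\|_{V_\ast}\|\bv_h\|_1\lesssim |\Delta t|^2\|\bv_h\|_{V_\ast}$, where $\|\bu^{k}\|_{V_\ast}$ is bounded by $\|\bu\|_{L^\infty(0,T;H^2)}$ via~\eqref{idfund}.

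The genuinely surface-specific terms are $a_h(\bu^{k},v_{h,N}\bn)$ and $-\tfrac12 c(\bu^{k},v_{h,N}\bn,\bu^{k})$, and I expect these to be the main obstacle. Since the exact solution $\bu$ is tangential, we can take a normal extension so $\nabla\bu^{k}\bn=0$ in $\OGamma$, killing the $\rho_u$ volumetric term in $a_h$; also $u_N^k=0$ kills the $\tau$-penalty term, so $a_h(\bu^{k},v_{h,N}\bn)$ reduces to the $E_s$-integral $\int_\Gamma E_s(\bu^{k}):E_s(v_{h,N}\bn)\,ds$. The key point is that $E_s(v_{h,N}\bn)=v_{h,N}\bH$ by the identity~\eqref{idfund} applied with purely normal argument, and $\|\bH\|_{L^\infty(\Gamma)}\le C$ for a $C^2$ (here $C^3$) surface, so $|a_h(\bu^{k},v_{h,N}\bn)|\lesssim \|E_s(\bu^{k})\|\,\|v_{h,N}\|\lesssim \|\bu\|_{L^\infty(0,T;H^1)}\,\tau^{-\frac12}\,\tau^{\frac12}\|v_{h,N}\|\lesssim \tau^{-\frac12}\|\bv_h\|_{V_\ast}$, using that $\|v_{h,N}\|=\tau^{-\frac12}(\tau^{\frac12}\|v_{h,N}\|)\le\tau^{-\frac12}\|\bv_h\|_{V_\ast}$ by definition of the $\bV_\ast$-norm. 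Similarly $|c(\bu^{k},v_{h,N}\bn,\bu^{k})|=|\int_\Gamma (\bu^{k})^T(\nabla_\Gamma(v_{h,N}\bn))\bu^{k}\,ds|$; writing $\nabla_\Gamma(v_{h,N}\bn)=\bn(\nabla_\Gamma v_{h,N})^T+v_{h,N}\bH$ (product rule, with $\bH=\nabla_\Gamma\bn$) and using $\bu^{k}\cdot\bn=0$ to annihilate the first piece, this equals $\int_\Gamma v_{h,N}(\bu^{k})^T\bH\bu^{k}\,ds$, bounded by $\|\bH\|_\infty\|v_{h,N}\|\,\|\bu^{k}\|_{L^4}^2\lesssim \|\bu\|^2_{L^\infty(0,T;H^1)}\tau^{-\frac12}\|\bv_h\|_{V_\ast}$ via $H^1(\Gamma)\subset L^4(\Gamma)$. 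Collecting the four estimates gives the first bound in~\eqref{est:consist}.

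Finally, for the pressure consistency term, $\text{consist}^k_p(q_h)=-s_h(p^{k},q_h)=-\rho_p\int_{\OGamma}\nabla p^{k}\cdot\nabla q_h\,dx$. By Cauchy--Schwarz, $|s_h(p^{k},q_h)|\le \rho_p^{\frac12}\|\nabla p^{k}\|_{L^2(\OGamma)}\cdot\rho_p^{\frac12}\|\nabla q_h\|_{L^2(\OGamma)}=\|p^{k}\|_s^{(\mathrm{ext})}\|q_h\|_s$ where, since $p^{k}$ is the normal extension of a surface function, $\|\nabla p^{k}\|_{L^2(\OGamma)}\lesssim h^{\frac12}\|\nabla_\Gamma p^{k}\|$ by the first line of~\eqref{normal}; with $\rho_p\simeq h$ from~\eqref{parameters} this gives $\rho_p^{\frac12}\|\nabla p^{k}\|_{L^2(\OGamma)}\lesssim h\|\nabla_\Gamma p^{k}\|\lesssim h\|p\|_{L^\infty(0,T;H^1)}$, hence $|\text{consist}^k_p(q_h)|\lesssim h\|q_h\|_s$, which is the second bound. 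Throughout, all hidden constants depend only on $\Gamma$, the regularity norms in~\eqref{eqA1}, and the viscosity, and are independent of $h$, $\Delta t$, $\tau$, and the position of $\Gamma$ over the mesh, as required.
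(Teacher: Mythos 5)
Your proof is correct and follows essentially the same route as the paper's: Taylor/Peano-kernel bounds of order $|\Delta t|^2$ for the BDF2 and extrapolation terms, the identity $E_s(v_{h,N}\bn)=v_{h,N}\bH$ together with $\|v_{h,N}\|\le\tau^{-1/2}\|\bv_h\|_{V_\ast}$ for the two normal-component terms, and the double gain of $h^{1/2}$ from $\rho_p\simeq h$ and \eqref{normal} for the pressure term. The only (immaterial) deviation is that you bound $c^\ast(\widetilde\bu^{k}-\bu^{k},\bu^{k},\bv_h)$ directly via the continuity estimate \eqref{c_cont}, whereas the paper first splits $\bv_h$ into tangential and normal parts; both give the same $|\Delta t|^2$ contribution under Assumption~\ref{A1}.
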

\begin{proof} We treat $\textrm{consist}(\bv_h)$ term by term:
\begin{equation*}
\begin{split}
|(\bu_t(t^k)&-\left[\bu\right]_t^{k}), \bv_h)| \\&=  \left|\int_{\Gamma}\left(\int_{t^{k-2}}^{t^k}\frac{(t-t^{k-2})^2}{4\Delta t} \bu_{ttt} \,dt
-\int_{t^{k-1}}^{t^k}\frac{(t-t^{k-1})^2}{\Delta t}  \bu_{ttt} \,dt\right)\cdot\bv_h\, dx\right|\\
& \lesssim |\Delta t|^2\sup_{t\in[t^{k-2},t^k]}\| \bu_{ttt}\|\|\bv_{h}\|\\
&\lesssim
|\Delta t|^2\| \bu_{ttt}\|_{L^{\infty}(0,T;L^2(\Gamma))}\|\bv_{h}\|_{V_\ast}.
 \end{split}
\end{equation*}
Using the definition of the trilinear form, identity $\nablaG\bu=\nablaG\bu_T + u_N \bH,$ and estimates \eqref{c_cont},
we have
\begin{equation*}
\begin{split}
c^\ast(\widetilde\bu^{k}-\bu^{k},\bu^{k},\bv_{h})&=\frac12( c(\widetilde\bu^{k}-\bu^{k},\bu^{k},\bv_{h})-c(\widetilde\bu^{k}-\bu^{k},\bv_{h},\bu^{k}))\\
&=\frac12\left( 2 c(\widetilde\bu^{k}-\bu^{k},\bu^{k},\bv_{h,T})+\int_{\Gamma}v_{h,N} (\widetilde\bu^{k}-\bu^{k})^T\bH\bu^{k}\,dx\right)\\
&\lesssim |\Delta t|^2 \sup_{t\in[t^{k-2},t^k]}\|\bu_{tt}\|_{L^4(\Gamma)}( \|\nabla_\Gamma\bu^k\|_{L^4(\Gamma)} \|\bv_{h,T}\|+\|v_{h,N}\| \|\bu^{k}\|_{L^4(\Gamma)})\\
&\lesssim |\Delta t|^2\|\bu_{tt}\|_{L^\infty(0,T;L^4(\Gamma))} \|\nabla_\Gamma\bu\|_{L^\infty(0,T;L^4(\Gamma))} \|\bv_{h}\|\\
&\lesssim |\Delta t|^2\|\bu_{tt}\|_{L^\infty(0,T;H^1(\Gamma))}\|\bu\|_{L^\infty(0,T;H^2(\Gamma))} \|\bv_{h}\|_{V_\ast}.
  \end{split}
\end{equation*}
In the last inequality we use embedding $H^1(\Gamma)\subset L^4(\Gamma)$ and \eqref{penalty}. Further, we compute
\begin{equation*}
\begin{split}
|a_h(\bu^{k},v_{h,N}\bn)|&=\left|\int_\Gamma E_s(\bu^{k}):\bH v_{h,N}\,ds\right|\lesssim \|\bu^{k}\|_1\|v_{h,N}\|\lesssim \tau^{-\frac12}\|\bu^{k}\|_1\|\bv_{h}\|_{V_\ast},\\
|c(\bu^{k},v_{h,N}\bn,\bu^{k})|& =\left|\int_\Gamma (\,(\bu^{k})^T\bH\bu^{k}) v_{h,N}\,ds\right|\lesssim \|\bu^{k}\|_{L^4(\Gamma)}^2 \|v_{h,N}\|\\
&\lesssim\|\bu^{k}\|_{1}^2 \|v_{h,N}\|\lesssim \tau^{-\frac12}\|\bu^{k}\|_{1}^2 \|\bv_{h}\|_{V_\ast}.
  \end{split}
\end{equation*}
For the second consistency term we have thanks to the definition of the $s$-norm, \eqref{parameters} and \eqref{normal}:
\[
\begin{split}
|\text{consist}^k_p(q_h)|&\le \|p^k\|_s\|q_h\|_s\lesssim \sqrt{h} \|\nabla p^k\|_{L^2(\OGamma)} \|q_h\|_s
\lesssim h \|\nabla_\Gamma p^k\| \|q_h\|_s\\
& \lesssim h \| p\|_{L^\infty(0,T;H^1(\Gamma))} \|q_h\|_s.
  \end{split}
\]
Now \eqref{est:consist} follows from the assumptions \eqref{eqA1} on the regularity of $\bu$ and $p$.
\end{proof}

For $k=1$ estimate as in \eqref{est:consist} holds with $|\Delta t|^2$ replaced by $|\Delta t|$.

Let $\err_u^k = \bu^k - \bu_h^k$, $\err_p^k=p^k-p_h^k$, subtracting \eqref{discrete} from \eqref{e:exid} we obtain the error equations
\begin{equation} \label{e:erreq}
 \begin{aligned}
  (\left[\err_u\right]_t^{k},\bv_h)+ a_h(\err^{k}_u,\bv_h)+ c^\ast(\widetilde\bu^{k}_h,\err^{k}_u,\bv_h) + b(\bv_h,\err_p^k) & =  \text{consist}^k_u(\bv_h)-c^\ast(\widetilde\err^{k}_u,\bu^{k},\bv_h), \\
  b(\err_u^{k},q_h)-s_h(\err_p^{k},q_h) & =\text{consist}^k_p(q_h),
 \end{aligned}
\end{equation}
for all  $\bv_h \in \bU_h$ and $q_h \in Q_h$.

\subsection{Error estimate in the energy norm} \label{sec:aprioriest}
 We let $\bu^k_I=\mathcal{I} (\bu^k) \in \bU_h$ and $p^k_I=\mathcal{I} (p^k) \in Q_h^k$ be the Lagrange interpolants  for (extensions of) $\bu^k$ and $p^k$ in $\OGamma$; we assume both surface velocity and pressure to be  sufficiently smooth so that the interpolation is well-defined.
%Based on trace inequalities, approximation properties of $I_h ( \bv^e)$ in $\OGamma$, and \eqref{normal}--\eqref{normalv} one gets; see, e.g.,
The following approximation properties of  $\bu^k_I$ and $p^k_I$ are well-known from the literature; see, e.g,
\cite{ORG09,reusken2015analysis,olshanskii2017trace}:
\begin{equation}\label{interp_est}
 \begin{aligned}
 \|\bu - \bu^k_I\|+h\|\nablaG(\bu - \bu^k_I)\|+h^{\frac12}\|\nabla(\bu - \bu^k_I)\|_{L^2(\OGamma)}&\le C h^2\|\bu\|_{H^2(\Gamma)},\\
 \|p - p^k_I\|+h\|\nablaG(p - p^k_I)\|+h^{\frac12}\|\nabla(p - p^k_I)\|_{L^2(\OGamma)}&\le C h\|p\|_{H^1(\Gamma)}.
  \end{aligned}
\end{equation}
We emphasize  that a constant $C$ in \eqref{interp_est} depends only  on the shape regularity of tetrahedra from $\OGamma$, but not on how $\Gamma$ intersects them.

Following the standard line of arguments, we split the error into finite element and approximation parts,
\[
\err_u^k=\underset{\mbox{$\be^k$}}{\underbrace{(\bu^k-\bu^k_I)}}+\underset{\mbox{$\be^k_h \in \bV_h$}}{\underbrace{(\bu^k_I-\bu^k_h)}}, \quad
\err_p^k=\underset{\mbox{$e^k$}}{\underbrace{(p^k-p^k_I)}}+\underset{\mbox{$e^k_h \in Q_h$}}{\underbrace{(p^k_I-p^k_h)}}.
\]
Equation \eqref{e:erreq} yields
\begin{equation}\label{e:err1}
 \begin{aligned}
  (\left[\be_h\right]_t^{k},\bv_h)+ a_h(\be^{k}_h,\bv_h)&+ c^\ast(\widetilde\bu^{k}_h,\be^{k}_h,\bv_h) + b(\bv_h,e^k_h)\\&  = \text{consist}^k_u(\bv_h)- \text{interpol}^k_u(\bv_h)-c^\ast(\widetilde\err^{k}_u,\bu^{k},\bv_h),\\
    b(\be_h^{k},q_h)-s_h(e^{k}_h,q_h) & =\text{consist}^k_p(q_h)-\text{interpol}^k_p(q_h),
 \end{aligned}
\end{equation}
for all  $\bv_h \in \bU_h$ and $q_h \in Q_h$,
with the interpolation terms
\[
\begin{aligned}
 \text{interpol}^k_u(\bv_h)&=(\left[\be\right]_t^{k},\bv_h)+ a_h(\be^{k},\bv_h)+ c^\ast(\widetilde\bu^{k}_h,\be^{k},\bv_h) + b(\bv_h,e^k),\\
 \text{interpol}^k_p(q_h)&=b(\be^{k},q_h)-s_h(e^{k},q_h)
\end{aligned}
\]
We  estimate the interpolation and $c^\ast$ terms on the right hand side in the following lemma.

\begin{lemma}\label{l_interp} Assume \eqref{eqA1},
then it holds
\begin{equation}\label{est_inter}
\begin{split}
  |\text{\rm interpol}^k_u(\bv_h)|&\lesssim (h+\tau^{\frac12}h^2+ h\|\widetilde\bu^{k}_h\|_{V_\ast})\, \|\bv_{h}\|_{V_\ast}, \quad
  |\text{\rm interpol}^k_p(q_h)|\lesssim h\|q_h\|_s, \\
   |c^\ast(\widetilde\err^{k}_u,\bu^{k},\bv_h)|&\lesssim (h+\|\widetilde\be_h^{k}\|)\|\bv_{h}\|_{V_\ast}, \quad k=1,2\dots.
\end{split}
\end{equation}
\end{lemma}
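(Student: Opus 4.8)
The plan is to split each of the three quantities in \eqref{est_inter} into its defining terms and bound them one by one, using the interpolation estimates \eqref{interp_est}, the continuity bounds \eqref{a_cont}--\eqref{c_cont}, the Gagliardo--Nirenberg inequality \eqref{Ladyzh}, the identity \eqref{idfund} together with $\nablaG\bu=\nablaG\bu_T+u_N\bH$ to separate tangential and normal components, the strip relations \eqref{normal}--\eqref{normalv}, and the standing hypothesis $\tau\gtrsim1$. The guiding principle is that the normal component $e^k_N=\be^k\cdot\bn$ of the velocity interpolation error is always kept in its $L^2(\Gamma)$-norm: because $\bu^k\cdot\bn=0$, one has $e^k_N=-(\bu^k_I-\bu^k)\cdot\bn$, so $\|e^k_N\|\lesssim h^2\|\bu^k\|_{H^2}$ by \eqref{interp_est}, and no power of $\tau$ then contaminates the $O(h)$ contributions. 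Smooth factors may carry any integrability required by the regularity assumptions \eqref{eqA1} and the surface embeddings $H^2(\Gamma)\hookrightarrow W^{1,4}(\Gamma)\cap L^\infty(\Gamma)$, $H^1(\Gamma)\hookrightarrow L^4(\Gamma)$.

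For $\text{interpol}^k_u(\bv_h)$ I treat its four terms. (i) For $(\left[\be\right]_t^{k},\bv_h)$, rewrite the BDF2 quotient through the differences $\be^j-\be^{j-1}=(\mathrm{Id}-\mathcal I)\int_{t^{j-1}}^{t^j}\bu_t\,dt$ and apply the first-order bound in \eqref{interp_est}, using $\tfrac{d\bu}{dt}\in L^\infty(0,T;H^1(\Gamma)^3)$; this gives $\|\left[\be\right]_t^{k}\|\lesssim h$. (ii) For $a_h(\be^k,\bv_h)$: the strain part is $\lesssim\|\nablaG\be^k\|\,\|E_s(\bv_h)\|\lesssim h\,\|\bv_h\|_{V_\ast}$ by \eqref{idfund}, \eqref{interp_est}, \eqref{a_cont}; the penalty part is $\tau\|e^k_N\|\,\|v_{h,N}\|\lesssim\tau^{1/2}h^2\|\bv_h\|_{V_\ast}$ (since $\tau\gtrsim1$ gives $\|v_{h,N}\|\le\tau^{-1/2}\|\bv_h\|_{V_\ast}$); and, because $\nabla\bu^k\bn=0$, the volumetric $\rho_u$-part equals $\rho_u\int_{\OGamma}(\nabla\be^k\bn)\cdot(\nabla\bv_h\bn)\,dx$, which by Cauchy--Schwarz over $\OGamma$, $\rho_u\simeq h$ and $\|\nabla\be^k\|_{L^2(\OGamma)}\lesssim h^{3/2}$ is $\lesssim h^2\,|\bv_h|_a$ --- an $O(h^2)$ quantity paired with the seminorm $|\bv_h|_a$ that sits on the left of the energy estimate. (iii) For $c^\ast(\widetilde\bu_h^k,\be^k,\bv_h)$, use the tangential/normal expansion from \eqref{c_cont}: $\be^k$ enters only through $\|\nablaG(\be^k)_T\|\lesssim h$ and $\|e^k_N\|\lesssim h^2$ (no $\tau$), while $\|(\widetilde\bu_h^k)_T\|_{L^4}$ and $\|(\bv_h)_T\|_{L^4}$ are controlled by the corresponding $V_\ast$-norms, producing the $h\,\|\widetilde\bu_h^k\|_{V_\ast}\|\bv_h\|_{V_\ast}$ term. (iv) For $b(\bv_h,e^k)$, apply \eqref{b_cont} with $\|e^k\|\lesssim h\|p\|_{H^1}$. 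Adding these up gives the first bound in \eqref{est_inter}.

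For $\text{interpol}^k_p(q_h)=b(\be^k,q_h)-s_h(e^k,q_h)$: integrate by parts on the closed surface to get $b(\be^k,q_h)=\int_\Gamma\nablaG q_h\cdot\be^k\,ds$ (the curvature contribution is of the same order), and combine $\|\be^k\|\lesssim h^2$ with the elementwise bound $\|\nabla q_h\|_{L^2(\Gamma)}\lesssim h^{-1/2}\|\nabla q_h\|_{L^2(\OGamma)}$ --- valid since a planar cut of a tetrahedron has area $\lesssim h^2\simeq h^{-1}|T|$ and $\nabla q_h$ is piecewise constant --- and $\rho_p\simeq h$ to obtain $|b(\be^k,q_h)|\lesssim h\|q_h\|_s$. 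For $s_h(e^k,q_h)$, Cauchy--Schwarz in the $s_h$-product gives $\le(\rho_p\|\nabla e^k\|^2_{L^2(\OGamma)})^{1/2}\|q_h\|_s$, and $\|\nabla e^k\|_{L^2(\OGamma)}\lesssim h^{1/2}\|p\|_{H^1}$ from \eqref{interp_est} with $\rho_p\simeq h$ yields $|s_h(e^k,q_h)|\lesssim h\|p\|_{H^1}\|q_h\|_s$.

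I expect the last term, $c^\ast(\widetilde\err^k_u,\bu^k,\bv_h)$, to be the main obstacle. Decompose $\widetilde\err^k_u=\widetilde\be^k+\widetilde\be_h^k$ with $\widetilde\be^k=2\be^{k-1}-\be^{k-2}$ and $\widetilde\be_h^k=2\be_h^{k-1}-\be_h^{k-2}$ (notation of \eqref{BDF2}). The interpolation piece $c^\ast(\widetilde\be^k,\bu^k,\bv_h)$ is handled as in \eqref{c_cont} with $\|(\widetilde\be^k)_T\|_1\lesssim h$, the normal part entering through an $O(h^2)$ $L^2$-norm, and $\|\bu^k\|_{V_\ast}=\|\bu^k\|_1$ (recall $u^k_N=0$), giving $h\,\|\bv_h\|_{V_\ast}$. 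The delicate piece is $c^\ast(\widetilde\be_h^k,\bu^k,\bv_h)$: only the $L^2$-norm $\|\widetilde\be_h^k\|$ is available here --- its $H^1$-norm is precisely what the final energy estimate is meant to control --- so the $L^4$--$H^1$--$L^4$ Hölder splitting of \eqref{c_cont} cannot be used and the smooth factor $\bu^k$ must carry the extra integrability instead. Writing $c(\widetilde\be_h^k,\bu^k,\bv_h)=\int_\Gamma\big((\nablaG\bu^k)(\widetilde\be_h^k)_T\big)\cdot(\bv_h)_T\,ds$ and $c(\widetilde\be_h^k,\bv_h,\bu^k)=\int_\Gamma\big((\nablaG\bv_h)(\widetilde\be_h^k)_T\big)\cdot\bu^k\,ds$, I bound the first by $\|\nablaG\bu^k\|_{L^4}\,\|\widetilde\be_h^k\|\,\|(\bv_h)_T\|_{L^4}$ and the second by $\|\nablaG\bv_h\|\,\|\widetilde\be_h^k\|\,\|\bu^k\|_{L^\infty}$, using $H^2(\Gamma)\hookrightarrow W^{1,4}(\Gamma)\cap L^\infty(\Gamma)$, the bound $\|\nablaG\bv_h\|\le\|\nablaG(\bv_h)_T\|+\|v_{h,N}\bH\|\lesssim\|\bv_h\|_{V_\ast}$ following from \eqref{idfund} and $\tau\gtrsim1$, and $\|(\bv_h)_T\|_{L^4}\lesssim\|(\bv_h)_T\|_1\le\|\bv_h\|_{V_\ast}$. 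This produces the $\|\widetilde\be_h^k\|\,\|\bv_h\|_{V_\ast}$ term and closes \eqref{est_inter}; the cases $k=1$ need only the obvious change in the time-difference and extrapolation formulas.
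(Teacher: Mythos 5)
Your proof is correct and follows essentially the same route as the paper's: the same term-by-term decomposition, the same treatment of the BDF2 difference of $\be$ via $\be_t$ and the commutation of interpolation with time differentiation, the same care in keeping $e^k_N$ in its $O(h^2)$ $L^2$-norm so that $\tau$ enters only through the penalty term, and the same key device of shifting the extra integrability onto the smooth factor $\bu^k$ (via $H^2\hookrightarrow W^{1,4}\cap L^\infty$) in the bound for $c^\ast(\widetilde\be_h^k,\bu^k,\bv_h)$. The only (harmless) deviation is that you pair the volumetric $\rho_u$-term with $|\bv_h|_a$ rather than $\|\bv_h\|_{V_\ast}$, which is if anything slightly more careful than the paper.
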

\begin{proof}
We extend $\bu^k_I$, $k=1,\dots,N$, for all $t\in[t^{k-1},t^k]$  as the Lagrange interpolant of $u(t)$ in all nodes from $\OGamma$. %so that $\bu^k_I=\bu_I^{k-1}$ on $\OGamma$ for $t=t^{k-1}$.
Since $(\bu^k_I)_t$ is the nodal interpolant for $\bu_t$,
we have thanks to \eqref{interp_est} that
$
\|\be_t\|\lesssim h\|\bu_t\|_{H^{1}(\Gamma)}
$
for $t \in [0,T]$.
Let $k\ge2$, with the help of this bound and the Cauchy--Schwarz inequality we treat the first term in ${\rm interpol}^k_u(\bv_h)$,
\begin{equation*}\label{e:err2}
\begin{split}
\left|\int_\Gamma\right.&\left.\left[\be\right]_t^{k}\cdot\bv_h\,ds\right|\le \left(\frac32\left\|\frac{\be^k-\be^{k-1}}{\Delta t}\right\|+\frac12\left\|\frac{\be^{k-1}-\be^{k-2}}{\Delta t}\right\|\right)\|\bv_h\|  \\ &
 =|\Delta t|^{-1} \left(\frac32 \left\|\int^{t^k}_{t^{k-1}} \be_t(s)\,ds\right\|+\frac12 \left\|\int^{t^{k-1}}_{t^{k-2}} \be_t(s)\,ds\right\| \right)\|\bv_h\|\\&
 \le |\Delta t|^{-\frac12}\left(\frac32 \left(\int^{t^k}_{t^{k-1}}\| \be_t(s)\|^2\,ds\right)^{\frac12}+\frac12 \left(\int^{t^{k-1}}_{t^{k-2}}\| \be_t(s)\|^2\,ds\right)^{\frac12}\right)
 \|\bv_h\|
  \\ &
 \lesssim \,h\sup_{t\in[t^{k-2},t^k]} \| \bu_t\|_{H^1(\Gamma)}\|\bv_h\| \lesssim \,h \| \bu_t\|_{L^\infty(0,T;H^{1}(\Gamma)^3)}\|\bv_h\|_{V_\ast}
\end{split}
\end{equation*}
Similar we handle the term with $\left[\be\right]_t^{k}$ for $k=1$.
Other terms in $\text{\rm interpol}^k_u(\bv_h)$ are handled in a straightforward way using the Cauchy-Schwarz inequality, \eqref{c_cont}, \eqref{interp_est} and \eqref{normal}:
\begin{align*}
  a_h(\be^{k},\bv_h) & \lesssim  ( \|\be^k\|_1 + \tau^{\frac12}\|\be^k\| + h^\frac12 \|\be^k\|_{H^1(\OGamma)})\|\bv_h\|_{V_\ast}\\ &
    \lesssim \left((h+\tau^{\frac12}h^2) \|\bu^k\|_{H^2(\Gamma)}+h^{\frac32} \|\bu^k\|_{H^2(\OGamma)} \right)\|\bv_h\|_{V_\ast}\\
  & \lesssim (h+\tau^{\frac12}h^2) \|\bu^k\|_{H^2(\Gamma)} \|\bv_h\|_{V_\ast},\\
   c^\ast(\widetilde\bu^{k}_h,\be^{k},\bv_h)& \lesssim \|\widetilde\bu^{k}_h\|_{V_\ast}\|\be^{k}\|_{1}\|\bv_h\|_{V_\ast}\lesssim h\|\widetilde\bu^{k}_h\|_{V_\ast}\|\bu^k\|_{H^2(\Gamma)}\|\bv_h\|_{V_\ast},\\
   b(\bv_h,e^k)& \lesssim h\|p^k\|_{H^1(\Gamma)}\|\bv_h\|_{V_\ast}.
\end{align*}
For the second interpolation term $\text{\rm interpol}^k_p(q_h)$, we similarly have by \eqref{interp_est} and \eqref{normal}
\begin{align*}
b(\be^{k},q_h)&\lesssim \|\be^{k}\|\|\nabla_\Gamma q_h\| \lesssim h^2\|\bu^k\|_{H^2(\Gamma)}\|\nabla_\Gamma q_h\|  \le h^2\|\bu^k\|_{H^2(\Gamma)}\|\nabla q_h\|\\ & \lesssim h^{\frac32}\|\bu^k\|_{H^2(\Gamma)}\|\nabla q_h\|_{L^2(\OGamma)} \lesssim h\|\bu^k\|_{H^2(\Gamma)}\|q_h\|_s, \\
|s_h(e^{k},q_h)|&\lesssim h\|p^k\|_{H^1(\Gamma)}\|q_h\|_s.
\end{align*}
Note that we used the estimate $\|\nabla q_h\|\lesssim h^{-\frac12}\|\nabla q_h\|_{L^2(\OGamma)}$, which is elementary, since $\nabla q_h$ is constant in each tetrahedra from $\OGamma$; see, e.g., \cite[Lemma~4.3]{reusken2015analysis} for the (more general) estimate.
Finally, we estimate
\begin{align*}
|c^\ast&(\widetilde\err^{k}_u,\bu^{k},\bv_h)|\le |c^\ast(\widetilde\be^{k},\bu^{k},\bv_h)|+ |c^\ast(\widetilde\be^{k}_h,\bu^{k},\bv_h)|\\
&\lesssim\|\widetilde\be^{k}\|_1\|\bu^{k}\|_1\|\bv_h\|_{V_\ast}+ \|\widetilde\be^{k}_h\|\|\nabla_\Gamma\bu^{k}\|_{L^4(\Gamma)}\|\bv_{h,T}\|_{L^4(\Gamma)}+ \|\widetilde\be^{k}_h\|\|\nabla_\Gamma\bv_{h}\| \|\bu^{k}\|_{L^\infty(\Gamma)}\\
&\lesssim h \|\bu^{k}\|_{H^2(\Gamma)}\|\bv_{h}\|_{V_\ast}+\|\widetilde\be_h^{k}\|\|\bu^{k}\|_{H^2(\Gamma)}\|\bv_{h}\|_{V_\ast}.
\end{align*}
In the last inequality we used $L^4(\Gamma)\subset H^1(\Gamma)$, $L^\infty(\Gamma)\subset H^2(\Gamma)$ and (due to $\nabla_\Gamma\bv_{h}=\nabla_\Gamma\bv_{h,T}+v_{h,N}\bH$) $\|\nabla_\Gamma\bv_{h}\|\lesssim\|\bv_{h}\|_{V_\ast}$. We finally obtain the desired estimate \eqref{est_inter} using the assumption on the regularity of the solution: $ \| \bu_t\|_{L^\infty(0,T;H^{1}(\Gamma)^3)}\lesssim1$,  $\|p^k\|_{H^1(\Gamma)}\le \|p\|_{L^\infty(0,T;H^1(\Gamma))}\lesssim 1$, $\|\bu^{k}\|_{H^2(\Gamma)}\le \|\bu\|_{L^\infty(0,T;H^2(\Gamma))}\lesssim1$. \hfill
\end{proof}
\smallskip

We now apply the stability estimate in \eqref{energy_est} to  the error function, satisfying equation \eqref{e:err1},
and we further use the results in Lemmas~\ref{l_consist} and~\ref{l_interp} to estimate the right hand side.
Since the estimate in Lemmas~\ref{l_consist} holds for $k\ge2$, we first obtain that
\begin{multline}\label{aux1}
\|\be_h^n\|^2+\sum_{k=2}^{n}\Delta t\left\{\|\be_h^k\|_{\bV_\ast}^2+\|e_h^k\|_{s}^2\right\}\\ \lesssim
\|\be_h^1\|^2+(1+\tau h^2)h^2+|\Delta t|^4+\tau^{-1} + \Delta t\sum_{k=2}^{n}(\|\widetilde\be_h^k\|^2+h^2\|\bu^{k}_h\|_{V_\ast}^2)\\
\lesssim
\|\be_h^1\|^2+(1+\tau h^2)h^2+|\Delta t|^4 +\tau^{-1} + \Delta t\sum_{k=0}^{n-1}\|\be_h^k\|^2.
\end{multline}
For the last inequality, we applied the stability bound \eqref{energy_est2} for the finite element solution to conclude that $\Delta t\sum_{k=2}^{n}\|\bu^{k}_h\|_{V_\ast}^2\lesssim1$.

On the first time step of \eqref{discrete}, BDF1 method is applied and instead of the first consistency bound in \eqref{est:consist} we have
$|\text{\rm consist}^1_u(\bv_h)|\lesssim (|\Delta t|+\tau^{-\frac12})\,\|\bv_{h}\|_{V_\ast}$. Moreover, an examination of the proof of Lemma~\ref{l_consist} reveals that this estimate can be improved to
$|\text{\rm consist}^1_u(\bv_h)|\lesssim (|\Delta t|\|\bv_{h}\|+\|v_{h,N}\|)$.  All other estimates of consistency and interpolation terms in Lemmas~\ref{l_consist} and~\ref{l_interp} remain the same. Using this in \eqref{e:err1} for $k=1$ together with $\bv_h=\be_h^1$, $q_h=e^k_h$ leads after simple calculations to
\[
\|\be_h^1\|^2 + \Delta t(|\be_h^1|_{a}^2+\|e_h^1\|_{s}^2) \le \|\be^0_h\|^2+(1+\tau h^2)h^2+|\Delta t|^4 +\tau^{-1}.
\]
Substituting the above inequality to \eqref{aux1} and noting that $\|\be^0_h\|=0$ and $\|\be_h^1\|^2 + \Delta t|\be_h^1|_{a}^2\gtrsim \Delta t\|\be_h^k\|_{\bV_\ast}^2$, we get
\[
\|\be_h^n\|^2+\sum_{k=1}^{n}\Delta t\left\{\|\be_h^k\|_{\bV_\ast}^2+\|e_h^k\|_{s}^2\right\}
\lesssim (1+\tau h^2)h^2+|\Delta t|^4 +\tau^{-1} + \Delta t\sum_{k=0}^{n-1}\|\be_h^k\|^2.
\]
%with some constants $\widetilde C_1$ and $C_2$ independent of $\Delta t$, $h$ and position of $\Gamma$ over background mesh.
We next apply the discrete Gronwall inequality to obtain
\[ %begin{multline*}
\|\be_h^n\|^2+\sum_{k=1}^{n}\Delta t\left\{\|\be_h^k\|_{\bV_\ast}^2+\|e_h^k\|_{s}^2\right\} \lesssim
(1+\tau h^2)h^2+|\Delta t|^4+\tau^{-1}.
\] %end{multline*}
The triangle inequality and approximation properties  \eqref{interp_est} lead to the final error bound:
\begin{equation}\label{error}
\|\err_u^n\|^2+\sum_{k=1}^{n}\Delta t\left\{\|\err_u^k\|_{\bV_\ast}^2+\|\err_p^k\|_{s}^2\right\}
\lesssim
(1+\tau h^2)h^2+|\Delta t|^4+\tau^{-1},
\end{equation}
for $n=1,\dots,N$.

 The main result is summarized in the following theorem.
 \smallskip

\begin{theorem}\label{Th1} Assume $\Gamma\in C^3$ and the solution to the surface fluid system \eqref{strongform-1}--\eqref{strongform-2} is sufficiently smooth such that \eqref{eqA1} holds. For the trace finite element method \eqref{discrete} assume that the background mesh is quasi-uniform, and parameters satisfy \eqref{parameters}, \eqref{penalty}.
Then the finite element method is stable   and the error estimate  \eqref{error} holds.
\end{theorem}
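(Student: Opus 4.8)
The statement simply collects the stability bound \eqref{energy_est} and the error bound \eqref{error}, both of which have essentially been assembled above; the plan is to put the pieces together in the right order. For stability I would test the fully discrete scheme \eqref{discrete} (in the generalized form \eqref{discrete2}) with $\bv_h=\bu_h^k$, $q_h=-p_h^k$, and use the BDF2 polarization identity to rewrite $(\left[\bu_h\right]_t^k,\bu_h^k)$ as a telescoping quantity plus a nonnegative $O(|\Delta t|^3)$ dissipation, obtaining the discrete energy balance \eqref{balance_h}. Multiplying by $4\Delta t$ and summing over $k=1,\dots,n$ collapses the telescoping part, and the forcing $(\blf^k,\bu_h^k)$ and the functional $g^k(p_h^k)$ are absorbed by Cauchy--Schwarz in the $\bV_\ast'/\bV_\ast$ and $\|\cdot\|_{-s}/\|\cdot\|_s$ dualities. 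The decisive point is that $|\bu_h^k|_a^2+\|\bu_h^k\|^2$ controls $\|\bu_h^k\|_{V_\ast}^2$ by the discrete Korn inequality \eqref{korn_dis} (which uses \eqref{penalty}); running the same bootstrap as in the continuous energy argument of Section~\ref{s_weak}, with $\sum_k\Delta t$ in place of $\int_0^t$, yields \eqref{energy_est} and, as a by-product, \eqref{energy_est2}.

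For the error bound I would subtract \eqref{discrete} from the consistency identity \eqref{e:exid} to get the error equations \eqref{e:erreq}, split $\err_u^k=\be^k+\be_h^k$ and $\err_p^k=e^k+e_h^k$ into interpolation parts and finite element parts, and record the equation \eqref{e:err1} satisfied by $(\be_h^k,e_h^k)$. Since \eqref{e:err1} has exactly the algebraic structure of \eqref{discrete2}, the stability estimate \eqref{energy_est} applies to it verbatim, with the velocity right-hand side read off from $\text{consist}^k_u$, $\text{interpol}^k_u$, and $c^\ast(\widetilde\err^k_u,\bu^k,\cdot)$, and the pressure right-hand side from $\text{consist}^k_p-\text{interpol}^k_p$. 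Lemmas~\ref{l_consist} and~\ref{l_interp} bound these, for $k\ge2$, by $(|\Delta t|^2+h+\tau^{\frac12}h^2+\tau^{-\frac12}+\|\widetilde\be_h^k\|+h\|\bu_h^k\|_{V_\ast})\|\bv_h\|_{V_\ast}$ on the velocity side and by $\lesssim h\|q_h\|_s$ on the pressure side; inserting the by-product $\sum_k\Delta t\,\|\bu_h^k\|_{V_\ast}^2\lesssim1$ from \eqref{energy_est2} reduces everything to \eqref{aux1}, in which the only surviving coupling is $\Delta t\sum_{k=0}^{n-1}\|\be_h^k\|^2$.

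The first time step needs separate care, since it uses BDF1: the consistency error there is only first order in $\Delta t$, but a closer look at the proof of Lemma~\ref{l_consist} (keeping $\|v_{h,N}\|$ rather than bounding it by $\tau^{-\frac12}\|\bv_h\|_{V_\ast}$) gives $\|\be_h^1\|^2+\Delta t(|\be_h^1|_a^2+\|e_h^1\|_s^2)\lesssim(1+\tau h^2)h^2+|\Delta t|^4+\tau^{-1}$, of the same order as the bulk bound. Substituting into \eqref{aux1}, using $\be_h^0=0$ and $\|\be_h^1\|^2+\Delta t|\be_h^1|_a^2\gtrsim\Delta t\|\be_h^1\|_{V_\ast}^2$, and finally applying the discrete Gronwall inequality to absorb $\Delta t\sum_{k=0}^{n-1}\|\be_h^k\|^2$ bounds $\|\be_h^n\|^2+\sum_k\Delta t(\|\be_h^k\|_{V_\ast}^2+\|e_h^k\|_s^2)$ by $(1+\tau h^2)h^2+|\Delta t|^4+\tau^{-1}$. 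The triangle inequality together with the interpolation estimates \eqref{interp_est} then upgrades this to the claimed bound \eqref{error}.

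The main obstacle, and the reason the authors describe the handling as ``less standard'', is the absence of full dissipativity: the viscous form annihilates the Killing fields in $E$, so $\|\bu_h^k\|_{V_\ast}$ cannot be controlled from $|\bu_h^k|_a$ alone, and the $L^2$-in-time bootstrap must be threaded through both the stability and the error arguments; this is precisely why the weak norm $\|\cdot\|_{V_\ast}$ (rather than an energy-type seminorm) appears everywhere and why the convection term contributes the $\|\widetilde\be_h^k\|$ that feeds the Gronwall step. The remaining difficulty is bookkeeping: every estimate must track the penalty parameter $\tau$ so that the $\tau^{-1}$ and $\tau h^2$ contributions in \eqref{error} come out correctly, and the pressure consistency term must scale like $h\|q_h\|_s$ so that the $g^k$-tolerant form of \eqref{energy_est} is actually usable for the error equation.
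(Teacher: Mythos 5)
Your proposal follows essentially the same route as the paper: the discrete energy balance via the BDF2 polarization identity and the Korn-based control of the $\bV_\ast$-norm for stability, then application of that stability bound to the error equation \eqref{e:err1} with the consistency and interpolation bounds of Lemmas~\ref{l_consist} and~\ref{l_interp}, a separate improved treatment of the BDF1 first step, discrete Gronwall, and the triangle inequality with \eqref{interp_est}. The argument is correct and matches the paper's proof in all essential respects.
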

\smallskip

From the result in \eqref{error} we see that the optimal penalty parameter $\tau$ scales with $h^{-2}$. This is consistent with the analysis of the steady surface Stokes and vector Laplacian problems in \cite{olshanskii2018finite,gross2017trace}. Note that the squared $L^2(\Gamma)$-norm of normal component $u_{h,N}^k$ on the left hand side of \eqref{error} is multiplied by $\tau$, which leads to the estimate $\sum_{k=1}^{n}\Delta t\|u_{h,N}^k\|^2\lesssim (\tau^{-1}+h^2)h^2+|\Delta t|^4\tau^{-1}+\tau^{-2}$. This and \eqref{error} yields the following corollary.

\begin{corollary}\label{cor1} Let $\tau\simeq h^{-2}$. Under assumption of Theorem~\ref{Th1} the following error estimate holds:
 \begin{equation}\label{error1}
 \begin{split}
\max_{k=1,\dots,N}\|\err_u^k\|^2+\sum_{k=1}^{N}\Delta t\left\{\|\err_u^k\|_1^2+\|\err_p^k\|_{s}^2\right\}
&\lesssim h^2+|\Delta t|^4,\\
\sum_{k=1}^{N}\Delta t\|u_{h,N}^k\|^2&\lesssim  h^2(h^2+|\Delta t|^4).
\end{split}
\end{equation}
\end{corollary}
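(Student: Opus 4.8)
The plan is to specialize the already-established error bound \eqref{error} to the choice $\tau\simeq h^{-2}$ and then extract the stated consequences. With $\tau\simeq h^{-2}$ we have $\tau h^2\simeq 1$, so the right-hand side of \eqref{error} becomes $(1+\tau h^2)h^2+|\Delta t|^4+\tau^{-1}\simeq h^2+|\Delta t|^4+h^2\simeq h^2+|\Delta t|^4$. This already gives the first line of \eqref{error1} once we observe that, for this value of $\tau$, the norm $\|\cdot\|_{\bV_\ast}$ dominates $\|\cdot\|_1$: indeed $\|\bv\|_{\bV_\ast}^2=\|\bv_T\|_1^2+\tau\|v_N\|^2\ge \|\bv_T\|_1^2$, and combined with $\|v_N\|_1^2\lesssim\|v_N\|^2$ (equivalence of norms is not available for $v_N$, but what we actually need is $\|\err_u^k\|_1^2\le 2\|\,(\err_u^k)_T\|_1^2+2\|u_{h,N}^k\,\bn\|_1^2$ and the second term is controlled separately below) one gets $\|\err_u^k\|_1^2\lesssim\|\err_u^k\|_{\bV_\ast}^2$; summing over $k$ with weights $\Delta t$ transfers the bound. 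The $\max_{k}\|\err_u^k\|^2$ term likewise follows since the left-hand side of \eqref{error} already contains $\|\err_u^n\|^2$ for every $n=1,\dots,N$, so taking the maximum over $n$ is free.

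For the second line of \eqref{error1}, the key point is that the $\bV_\ast$-norm on the left of \eqref{error} carries the factor $\tau$ on the normal component: $\sum_k\Delta t\,\|\err_u^k\|_{\bV_\ast}^2\ge \tau\sum_k\Delta t\,\|u_{h,N}^k\|^2$, because $(\err_u^k)_N=u^k_N-u^k_{h,N}=-u^k_{h,N}$ (the true solution is tangential). Hence $\tau\sum_k\Delta t\,\|u_{h,N}^k\|^2\lesssim (1+\tau h^2)h^2+|\Delta t|^4+\tau^{-1}$, and dividing through by $\tau$ yields $\sum_k\Delta t\,\|u_{h,N}^k\|^2\lesssim (\tau^{-1}+h^2)h^2+|\Delta t|^4\tau^{-1}+\tau^{-2}$. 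Substituting $\tau\simeq h^{-2}$ gives $(\tau^{-1}+h^2)h^2\simeq h^4$, $|\Delta t|^4\tau^{-1}\simeq h^2|\Delta t|^4$, and $\tau^{-2}\simeq h^4$, so the sum is $\lesssim h^4+h^2|\Delta t|^4=h^2(h^2+|\Delta t|^4)$, as claimed.

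There is essentially no obstacle here: the corollary is a direct bookkeeping consequence of \eqref{error}. The one mild subtlety worth stating carefully is the passage from $\|\err_u^k\|_{\bV_\ast}$ to $\|\err_u^k\|_1$; one should note explicitly that $\|\err_u^k\|_1^2=\|(\err_u^k)_T\|_1^2+\|u_{h,N}^k\bn\|_1^2\lesssim \|\err_u^k\|_{\bV_\ast}^2+\|u_{h,N}^k\|^2\lesssim \|\err_u^k\|_{\bV_\ast}^2$, using \eqref{idfund} (so that $\nabla_\Gamma(u_{h,N}\bn)$ is controlled by $\|u_{h,N}\|$ via $\|\bH\|_{L^\infty(\Gamma)}<\infty$) together with $\tau\gtrsim 1$ from \eqref{penalty}. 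After that remark, the rest is arithmetic with the three terms $h^2$, $|\Delta t|^4$, $\tau^{-1}$ under the scaling $\tau\simeq h^{-2}$.
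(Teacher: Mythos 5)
Your proposal is correct and follows the same route as the paper: substitute $\tau\simeq h^{-2}$ into \eqref{error}, use that \eqref{error} holds for every $n$ to get the maximum, and divide the $\tau\|u_{h,N}^k\|^2$ contribution of the $\bV_\ast$-norm by $\tau$ to obtain exactly the paper's intermediate bound $\sum_k\Delta t\|u_{h,N}^k\|^2\lesssim(\tau^{-1}+h^2)h^2+|\Delta t|^4\tau^{-1}+\tau^{-2}$. Your additional remark justifying $\|\err_u^k\|_1^2\lesssim\|\err_u^k\|_{\bV_\ast}^2$ via \eqref{idfund} and $\tau\gtrsim1$ is a correct detail the paper leaves implicit.
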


\section{Numerical results}\label{s_num}
The section collects results of several numerical experiments that illustrate the performance of the finite element method on a model example of the Navier--Stokes equations posed on a unit sphere embedded in a cubic computational domain $\Omega=[-5/3,5/3]^3$.
We examine the accuracy of the method by varying discretization and penalty parameters. All results agree well with the error bound in Theorem~\ref{Th1} and Corollary~\ref{cor1}. In addition, we include an example which studies energy conservation property of the method.
%demonstrates some numerical properties of different formulation of non-linear terms for surface fluid equations. %may vary.

In all experiments we build a family of unfitted triangulations $\T_{h_\ell}$ of $\Omega$ consisting of $n_\ell^3$ sub-cubes, where each of the sub-cubes is further refined into 6 tetrahedra.
Here $\ell\in\Bbb{N}$ denotes the level of refinement, with mesh size $h_\ell= \frac{10/3}{n_\ell}$ and $n_\ell= 2^{\ell+1}$.
We set parameters $\rho_p=\rho_u=h$, which is in agreement with~\eqref{parameters}. In all experiments BDF2 discretization of the time derivative as in \eqref{BDF2} is used.
To perform numerical integration, we consider nodal interpolant $I_h(\phi)\in V_h$  of the level set function $\phi(\bx) = \|\bx\|_2 -1$, $\bx = (x_1, x_2, x_3)^T$. Further all integrals were computed over $\Gamma_h$, which is the  zero level of $I_h(\phi)$. All implementations were done in DROPS software package~\cite{DROPS}.

\subsection{Convergence to exact smooth solution}\label{s_num1}

\begin{figure}
	\begin{center}
		\begin{tikzpicture}[scale=0.8]
		\def\vara{10}
		\def\varb{1e-1}
		\begin{semilogyaxis}[ xlabel={Refinement level $\ell$},xmax=5,xmin=0, ylabel={Error}, ymin=5E-4
		, ymax=10, legend style={ cells={anchor=west}, legend pos=outer north east} ]
		
		\addplot+[red,mark=o,solid,mark size=3pt,line width=1.5pt] table[x=level, y=t_max_uT] {./test17_h.dat};
		
		\addplot+[blue,mark=+,mark size=3pt,line width=1.5pt] table[x=level, y=t_H1] {./test17_h.dat};
		
		\addplot+[black!30!yellow,mark=diamond,mark size=3pt,line width=1.5pt] table[x=level, y=t_L2_uN] {./test17_h.dat};
		
		\addplot+[black!30!green,mark=x,mark size=3pt,line width=1.5pt] table[x=level, y=t_L2_p] {./test17_h.dat};
		
		\addplot[dashed,line width=1pt] coordinates { % h^2
			(0,\vara) (1,\vara*0.5) (2,\vara*0.25) (3,\vara*0.125) (4,\vara*0.0625) (5,\vara*0.03125) (6,\vara*0.03125/2)
		};
		\addplot[dotted,line width=1pt] coordinates { % h^3
			(0,\varb) (1,\varb*0.5*0.5) (2,\varb*0.25*0.25) (3,\varb*0.125*0.125) (4,\varb*0.0625*0.0625) (5,\varb*0.03125*0.03125) (6,\varb*0.03125*0.03125*0.5*0.5)
		};
		\legend{
			$\max_k\|\bu^k - \bu_h^k\| $,
			$\left(\sum{}_k\, \Delta{}t\|\bu^k -\bu_h^k\|^2_{1}\right)^{1/2} $,
			$\left(\sum{}_k\,\Delta{}t\|\bu_h \cdot \bn_h \|^2\right)^{1/2} $,
			$\left(\sum{}_k\,\Delta{}t\|\bp^k - \bp_h^k\|^2\right)^{1/2} $,
			$\mathcal{O}(h^1)$,
			$\mathcal{O}(h^{2})$}
		\end{semilogyaxis}
		\end{tikzpicture}
		\caption{Velocity and pressure error in various norms against the refinement level $\ell$. Results were computed with $\Delta{}t=2^{1-\ell}/10=O(h)$, $\tau=h^{-2}_\ell$ and $\nu=1$.
		}
		\label{convergence}
	\end{center}
\end{figure}
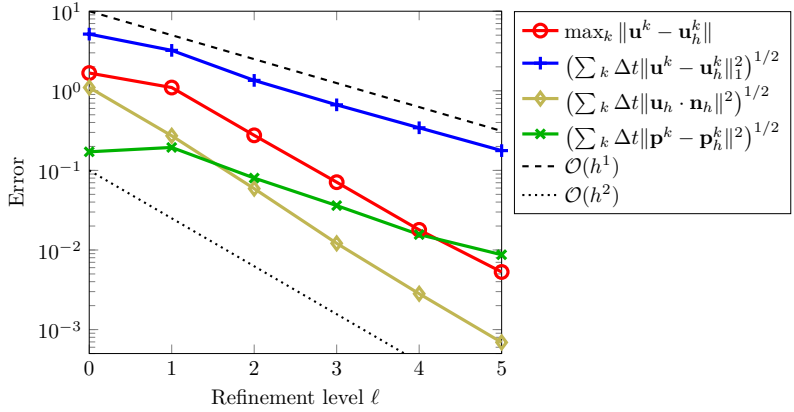

\begin{figure}
	\begin{center}
		\begin{tikzpicture}[scale=0.8]
		\def\vara{10}
		\def\varb{1e-1}
		\begin{semilogyaxis}[ xlabel={Refinement level $\ell$},xmax=5,xmin=0, ylabel={Error}, ymin=5E-4
		, ymax=10, legend style={ cells={anchor=west}, legend pos=outer north east} ]
		
		\addplot+[red,mark=o,solid,mark size=3pt,line width=1.5pt] table[x=level, y=t_max_uT] {./test17_h_nu=0.01.dat};
		
		\addplot+[blue,mark=+,mark size=3pt,line width=1.5pt] table[x=level, y=t_H1] {./test17_h_nu=0.01.dat};
		
		\addplot+[black!30!yellow,mark=diamond,mark size=3pt,line width=1.5pt] table[x=level, y=t_L2_uN] {./test17_h_nu=0.01.dat};
		
		\addplot+[black!30!green,mark=x,mark size=3pt,line width=1.5pt] table[x=level, y=t_L2_p] {./test17_h_nu=0.01.dat};
		
		\addplot[dashed,line width=1pt] coordinates { % h^2
			(0,\vara) (1,\vara*0.5) (2,\vara*0.25) (3,\vara*0.125) (4,\vara*0.0625) (5,\vara*0.03125) (6,\vara*0.03125/2)
		};
		\addplot[dotted,line width=1pt] coordinates { % h^3
			(0,\varb) (1,\varb*0.5*0.5) (2,\varb*0.25*0.25) (3,\varb*0.125*0.125) (4,\varb*0.0625*0.0625) (5,\varb*0.03125*0.03125) (6,\varb*0.03125*0.03125*0.5*0.5)
		};
		\legend{
			$\max_k\|\bu^k - \bu_h^k\| $,
			$\left(\sum{}_k\, \Delta{}t\|\bu^k -\bu_h^k\|^2_{1}\right)^{1/2} $,
			$\left(\sum{}_k\,\Delta{}t\|\bu_h \cdot \bn_h \|^2\right)^{1/2} $,
			$\left(\sum{}_k\,\Delta{}t\|p^k - p_h^k\|^2\right)^{1/2} $,
			$\mathcal{O}(h^1)$,
			$\mathcal{O}(h^{2})$}
		\end{semilogyaxis}
		\end{tikzpicture}
		\caption{Velocity and pressure error in various norms against the refinement level $\ell$. Results were computed with $\Delta{}t=2^{1-\ell}/10=O(h)$, $\tau=h^{-2}_\ell$ and $\nu=0.01$
		}
		\label{convergence1}
	\end{center}
\end{figure}
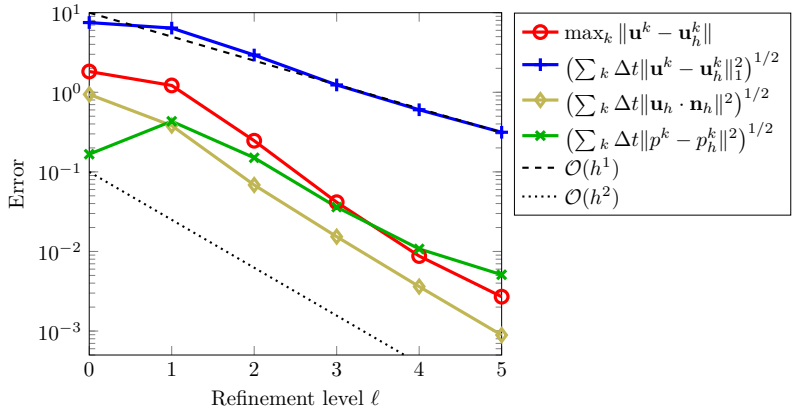

We first test the convergence of the finite element method \eqref{discrete} for the example of a synthetic smooth time-dependent solution. For the exact solution, we define
\begin{equation} \label{exact1}
\bu= f(z,t)\bxi_z\,,\quad p=0~,
\end{equation}
where $f(z,t)=1+z(1 - 3\exp(-t))$, and $\bxi_z$ is the tangential vector field corresponding to the rigid rotation of the sphere about $z$ axis normalized to have $\|\bxi_z\|=2$. We note that $\bxi_z\in E$ and $\div_\Gamma \bu=0$.
The right hand side $\blf$ is defined such that \eqref{exact1} is exact solution to
\eqref{strongform-1}--\eqref{strongform-2}.

  Following the result in Corollary~\ref{cor1} we set $\tau=h^{-2}$ and vary the mesh size and time step.
  Results are shown in Figure~\ref{convergence} for $\nu=1$  and Figure~\ref{convergence1} for $\nu=0.01$.
  For both values of the viscosity parameter, the convergence plots follow the same pattern.  We observe the first order convergence in $L^2(0,T;H^1(\Gamma))$ velocity norm and the second order convergence to zero of the normal velocity component. Both  trends are in agreement with  \eqref{error1}. In $L^\infty(0,T;L^2(\Gamma))$ velocity norm we see second order convergence, which is better than was predicted by our analysis. The pressure converges with a rate between $O(h)$ and $O(h^2)$.

\subsection{Penalty parameter dependence}
\begin{figure}
\begin{center}
	\begin{tikzpicture}[scale=0.8]
	\def\a{2}
	\def\b{4}
	\def\c{-10}
	\def\T{1}
	
	\begin{axis}[xlabel={Time}, ylabel={Kinetic energy}, ymin=1.5, ymax=3.75, xmin=0, xmax=5, legend style={ cells={anchor=west}, legend pos=outer north east} ]
	
	\addplot[blue,mark=x,solid, mark options={solid},mark size=3pt,mark repeat={20},line width=1.pt]	table[x=Time, y=Kinetic ]
	{./eta=1.000000_l=5_nu=1_Plot_test20_convective_BDF2=0.010000.txt};
	
	\addplot[red,mark=square,solid,mark options={solid},mark size=2pt,mark repeat={20},line width=1.pt]	table[x=Time, y=Kinetic ]
	{./eta=2.000000_l=5_nu=1_Plot_test20_convective_BDF2=0.010000.txt};
	
	\addplot[black!30!green,mark=diamond,solid, mark options={solid},mark size=2pt,mark repeat={40},line width=1.pt]	table[x=Time, y=Kinetic ]
	{./eta=8.000000_l=5_nu=1_Plot_test20_convective_BDF2=0.010000.txt};
	
	\addplot[black!30!yellow,mark=o,loosely dotted,mark options={solid},mark size=3pt,mark repeat={40},line width=1.pt]	table[x=Time, y=Kinetic ]
	{./eta=32.000000_l=5_nu=1_Plot_test20_convective_BDF2=0.010000.txt};
	
	%	\addplot[black!30!blue,mark=o,solid, mark options={solid},mark size=2pt,mark repeat={10},line width=0.5pt]	table[x=Time, y=Kinetic ]
	%	{eta=128.000000_l=5_nu=1_Plot_test20_convective_BDF2=0.010000.txt};
	%	
	
	\addplot[black,densely dashed,line width=2pt, samples=1000,domain=0:10]
	{
		0.2e1 / 0.35e2 * (0.7e1 * \a * \a * exp( (2 * \x / \T)) + \b * \b * exp( (2 * \x / \T)) + 0.2e1 * exp( (\x / \T)) * \b * \c + \c * \c) * exp(- (2 * \x / \T))
		%( \b * \b * \c * \c * exp( (2 * \x / \T)) + 0.2e1 * exp( (\x / \T)) * \b * \b * \c * \d
		%+ 0.5e1 * \a * \a * exp( (2 * \x / \T))
		%+ \b * \b * \d * \d
		%)* exp(- (2 * \x / \T))
		%/ 0.5e1	
	};
	
	\legend{$ \tau=1$,
		$	\tau=2$,
		$ \tau=8$,
		$	\tau=32$,
		reference}
	\end{axis}
	\end{tikzpicture}
	\caption{Evolution of the kinetic energy for numerical solutions for different values of the penalty parameter $\tau$. Other parameters are fixed: $l=5$, $\Delta{}t=0.01$ and $\nu=1$.} \label{fig:tau}
\end{center}
\end{figure}
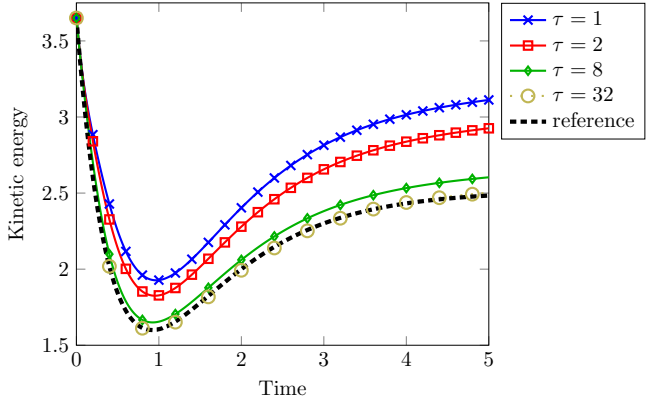

In this section we demonstrate that taking penalty parameter $\tau$ large enough is important for the numerical accuracy, but letting $\tau\to\infty$ leads to  larger errors in agreement with \eqref{error}. We consider two velocity--pressure pairs:
\begin{equation}\label{sol2}
\text{(a)}~~\bu= x f_1(z,t)\bxi_z\,,\quad  p=0,\qquad
\text{(b)}~~\bu= f_2(t)\bP \be_x\,,\quad  p=xy^3+z,
\end{equation}
where $f_1(z,t)=2+z(4 -10\exp(-t))$, $f_2(t)=1-\exp{}(1-6t)$, and calculate right hand sides such that (a) and (b) are exact solutions to \eqref{strongform-1}--\eqref{strongform-2}.  We choose both solutions such that $\Div_\Gamma\bu\neq0$, since otherwise the consistency term $E_s(\bu):\bH v_N$ vanishes for the spherical $\Gamma$ and does not contribute to the error equation. For both solutions we observe convergence of the method (plots not included) if we use the same refinement strategy as in section~\ref{s_num1}.
We next compute finite element solutions approximating \eqref{sol2}(a) with several values of the penalty parameter $\tau=2^k$, $k=0,\dots,5$. We fix mesh refinement level  $\ell=5$ and time step $\Delta t=0.01$.
Figure~\ref{fig:tau} shows the evolution of the kinetic energy for numerical solutions versus reference data. For $\tau=2^5$ the computed values match well with the reference curve.

%\begin{figure}
%	\begin{center}
%		\begin{tikzpicture}[scale=0.8]
%		\def\vara{1}
%		\def\varb{1}
%		\begin{semilogyaxis}[ xlabel={$\log_2 \tau$}, ylabel={Error}, ymin=1E-1, xmax=20,xmin=0,
%		, ymax=3,
%		xtick={0,5,10,15,20},
%		%xticklabels={$h^{-2}$,0,10,20},
%		legend style={ cells={anchor=west}, legend pos=outer north east} ]
%		\addplot+[red,mark=o,mark size=3pt,line width=1.5pt] table[x=level, y=t_max_uT] {./pmr_test95_eta_a=-1.dat};
%		\addplot+[blue,mark=+,mark size=3pt,line width=1.5pt] table[x=level, y=t_H1] {./pmr_test95_eta_a=-1.dat};
%		%		\addplot+[black!30!yellow,mark=diamond,mark size=3pt,line width=1.5pt] table[x=level, y=t_L2_uN] {./pmr_test95_eta_a=-1.dat};
%		\addplot+[black!30!green,mark=x,mark size=3pt,line width=1.5pt] table[x=level, y=t_L2_p] {./pmr_test95_eta_a=-1.dat};
%		
%		%\addplot[dashed,line width=1pt] coordinates { % h^2 (0,\vara) (1,\vara*0.5) (2,\vara*0.25) (3,\vara*0.125) (4,\vara*0.0625) (5,\vara*0.03125) (6,\vara*0.03125/2) (10,\vara*0.03125*0.5*0.25*0.25) };
%		
%		\legend{
%			$\max_k\|\bu^k- \bu_h^k\| $ ,
%			$\left(\sum{}_k\, \Delta{}t\|\bu^k -\bu_h^k\|^2_{1}\right)^{1/2}$ ,
%			%			$\left(\sum{}_k\,\Delta{}t\|\bu_h \cdot \bn_h \|^2_{L^2(\Gamma_h)}\right)^{1/2} $ ,
%			$\left(\sum{}_k\,\Delta{}t\|p^k - p_h^k\|^2\right)^{1/2} $ ,
%			% $\mathcal{O}(\tau^{-1})$,
%		}
%		\end{semilogyaxis}
%		\end{tikzpicture}
%		\caption{Error norms for  velocity and pressure plotted against the penalty parameter $\tau$. The results were computed obtained with $\nu=1$, $\ell=3$, $\Delta{}t=0.02$, $t=[0,1]$.
%		}
%	\label{fig:tau_l=3}
%	\end{center}
%
%\end{figure}

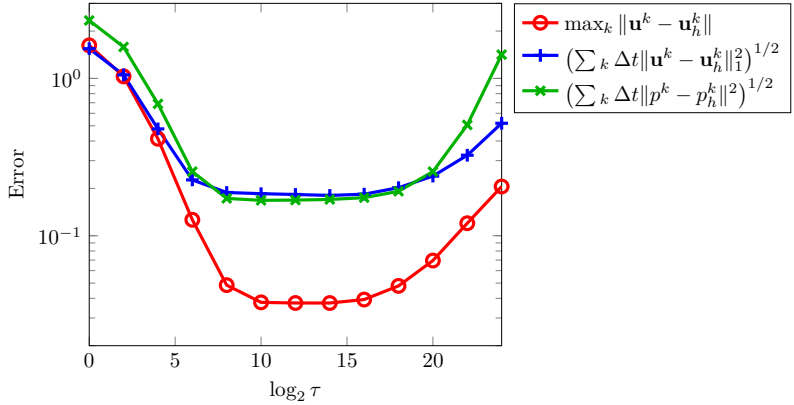
\begin{figure}
	\begin{center}
		\begin{tikzpicture}[scale=0.8]
		\def\vara{3}
		\def\varb{1}
		\begin{semilogyaxis}[ xlabel={$\log_2 \tau$}, ylabel={Error}, ymin=2E-2, xmax=24,xmin=0,
		, ymax=3,
		xtick={0,5,10,15,20},
		xticklabels={0,5,10,15,20},
		legend style={ cells={anchor=west}, legend pos=outer north east} ]
		\addplot+[red,mark=o,mark size=3pt,line width=1.5pt] table[x=level, y=t_max_uT] {./pmr_test95_eta_a=-1_l=4.dat};
		\addplot+[blue,mark=+,mark size=3pt,line width=1.5pt] table[x=level, y=t_H1] {./pmr_test95_eta_a=-1_l=4.dat};
		%		\addplot+[black!30!yellow,mark=diamond,mark size=3pt,line width=1.5pt] table[x=level, y=t_L2_uN] {./pmr_test95_eta_a=-1_l=4.dat};
		\addplot+[black!30!green,mark=x,mark size=3pt,line width=1.5pt] table[x=level, y=t_L2_p] {./pmr_test95_eta_a=-1_l=4.dat};
		
		%\addplot[dashed,line width=1pt] coordinates { % h^2 (0,\vara) (1,\vara*0.5) (2,\vara*0.25) (3,\vara*0.125) (4,\vara*0.0625) (5,\vara*0.03125) (6,\vara*0.03125/2) (10,\vara*0.03125*0.5*0.25*0.25)};
		
		\legend{
			$\max_k\|\bu^k- \bu_h^k\| $ ,
			$\left(\sum{}_k\, \Delta{}t\|\bu^k -\bu_h^k\|^2_{1}\right)^{1/2}$ ,
			%			$\left(\sum{}_k\,\Delta{}t\|\bu_h \cdot \bn_h \|^2_{L^2(\Gamma_h)}\right)^{1/2} $ ,
			$\left(\sum{}_k\,\Delta{}t\|p^k - p_h^k\|^2\right)^{1/2} $ ,
			% $\mathcal{O}(\tau^{-1})$,
		}
		\end{semilogyaxis}
		\end{tikzpicture}
		\caption{Error norms for  velocity and pressure plotted against the penalty parameter $\tau$. The results were computed obtained with $\nu=1$, $\ell=4$, $\Delta{}t=0.02$, $t=[0,1]$.
		}
	\label{fig:tau_l=4}
	\end{center}
\end{figure}

Further we study how the error depends on the variation of $\tau$ and compute finite element solutions approximating \eqref{sol2}(b) with values of the penalty parameter $\tau=2^k$, $k=0,1,\dots,20$. Again the mesh refinement level $\ell$ and time step  are fixed. Results for $\ell=4$, $\Delta t=0.02$   are shown in Fig.~\ref{fig:tau_l=4} and they are in a good agreement with error estimate~\eqref{error}. We note a large plateau of optimal values for the penalty parameter, which makes its easier to choose a suitable $\tau$.

\subsection{Energy conservation for infinitesimal rigid transformation}

Tangential flows of infinitesimal rigid transformations on manifolds do not dissipate energy; see the first equality in \eqref{balance_separate}. We mentioned already that this property does not necessarily carry over to the discrete flow systems.
Numerical diffusion produced by such flows is due to the geometry and functional spaces approximations. In this section, we demonstrate this numerical phenomena and show that both viscous and inertia terms contribute to the numerical dissipation.
Moreover, the amount of the dissipation depends on the form of nonlinear terms. %Below we briefly discuss different forms of nonlinear terms.
%
%\begin{table}
%	{\small
%		
%		\begin{center}
%			\begin{tabular}{|c|c|c|} \hline
%				\bf Formulation & \bf Nonlinear term& \bf Pressure \\ \hline
%				convective &  $w\cdot\gradG{}u$  &  kinematik   \\
%				rotational & $w\cdot\gradG{}u - w\cdot\gradG^T{}u$ & Bernoulli	\\
%				%				strain & $w\cdot\gradG{}u + w\cdot\gradG^T{}u$ & Lagrangian \\
%				conservative& $w\cdot\gradG{}u + u\divG{}w$& kinematik\\
%				skew-symmetric& $w\cdot\gradG{}u + u\divG{}w/2$ & kinematik  \\
%				%				EMAC & $w\cdot\gradG{}u + w\cdot\gradG^T{}u + w\divG{}u$& Lagrangian\\
%				\hline
%			\end{tabular}
%		\end{center}
%		\caption{Different forms of nonlinear term \label{Tab_forms}}
%		\label{nonlinear_formulations}
%	}
%\end{table}
%In this paper, we already dealt with two formulations of the nonlinear terms: the convective one as in \eqref{strongform-1} and the skew-symmetrized one defined by the bilinear form in \eqref{formC}.
In all numerical experiments so far, we used the convective form. %It is the most common choice in numerical simulations of the plane Navier--Stokes equations. Skew-symmetric form is standard for analysis. %However, other forms are also found in the literature, and Table~\ref{Tab_forms} lists some of them.
In the existing literature on numerical simulations of the surface Navier--Stokes equations, convective~\cite{reuther2015interplay,fries2017higher} and rotational~\cite{reuther2018solving,nitschke2017discrete} forms have been used. In the rotation form, one computes for the Bernoulli pressure instead of kinematic pressure and the nonlinear terms take the form $(\mbox{rot}_\Gamma\bu)\bn\times\bu=(\gradG{}\bu -\gradG^T{}\bu)\bu$.
 While equivalent for smooth solutions, this forms lead to discrete systems with possibly different numerical properties.

\begin{figure}
	\begin{center}
	\begin{tikzpicture}[scale=0.8]
	
	\begin{axis}[xlabel={Time}, ylabel={Kinetic energy}, ymin=0.96, ymax=1.01, xmin=0, xmax=10, legend style={ cells={anchor=west}, legend pos=outer north east} ]
	
	\addplot[black!30!yellow, dashed, mark=star,mark size=3pt,
	mark repeat={7}, mark options={solid},line width=1pt]
	table[x=Time, y=Kinetic]
	{./T=1_l=3_nu=0_Graph_test16_no_conv_BDF2=0.100000.txt};
	\addplot[black!30!yellow,solid,mark=star,mark size=3pt,
	mark repeat={7},  mark options={solid},line width=1.5pt]
	table[x=Time, y=Kinetic]
	{./T=1_l=4_nu=0_Graph_test16_no_conv_BDF2=0.100000.txt};
	
	\addplot[blue,mark=x,mark size=3pt,
	mark repeat={10}, dashed, mark options={solid},line width=1pt]
	table[x=Time, y=Kinetic]
	{./T=1_l=3_nu=0_Graph_test16_convective_BDF2=0.100000.txt};
	
	\addplot[blue,mark=x,mark size=3pt,
	mark repeat={10},mark options={solid},line width=1.5pt]
	table[x=Time, y=Kinetic]
	{./T=1_l=4_nu=0_Graph_test16_convective_BDF2=0.100000.txt};
	
	\addplot[red,dashed,mark=+,mark size=3pt,
	mark repeat={10}, mark options={solid},line width=1pt]
	table[x=Time, y=Kinetic]
	{./T=1_l=3_nu=0_Graph_test16_rotational_BDF2=0.100000.txt};
	\addplot[red,mark=+,mark size=2pt,
	mark repeat={10},line width=1.5pt]
	table[x=Time, y=Kinetic]
	{./T=1_l=4_nu=0_Graph_test16_rotational_BDF2=0.100000.txt};

	%\addplot[black!30!green,mark=triangle,mark size=2pt,
%	mark repeat={15}, dashed, mark options={solid},line width=1pt]
%	table[x=Time, y=Kinetic]
%	{./l=3_nu=0_Graph_test16_skew-symmetric.txt};
%	\addplot[black!30!green,mark=triangle,mark size=2pt,
%	mark repeat={15}, mark options={solid},line width=1.0pt]
%	table[x=Time, y=Kinetic]
%	{./l=4_nu=0_Graph_test16_skew-symmetric.txt};
%	
%	\addplot[orange,dashed,mark=square,mark size=2pt,
%	mark repeat={13}, mark options={solid},line width=1pt]
%	table[x=Time, y=Kinetic]
%	{./T=1_l=3_nu=0_Graph_test16_conservative_BDF2=0.100000.txt};
%	
%	\addplot[orange,mark=square,mark size=2pt,
%	mark repeat={13}, mark options={solid},line width=1.5pt]
%	table[x=Time, y=Kinetic]
%	{./T=1_l=4_nu=0_Graph_test16_conservative_BDF2=0.100000.txt};

	\addplot[black,dashed,line width=2pt,samples=10,domain=0:10] {1};
	
	\legend{
		$l=3$  w/o nonlinear,
		$l=4$  w/o nonlinear,
		$l=3$  convective ,
		$l=4$  convective,
		$l=3$  rotational,
		$l=4$  rotational,
		%$l=3$  skew-symmetric ,
%		$l=4$  skew-symmetric ,
%		$l=3$  conservative,
%		$l=4$  conservative,
		reference}
	
	\end{axis}
	\end{tikzpicture}
	\caption{ Kinetic energy evolution for FE Killing vector field, $\nu=0$.}
	\label{nu=0_stability}
\end{center}
\end{figure}

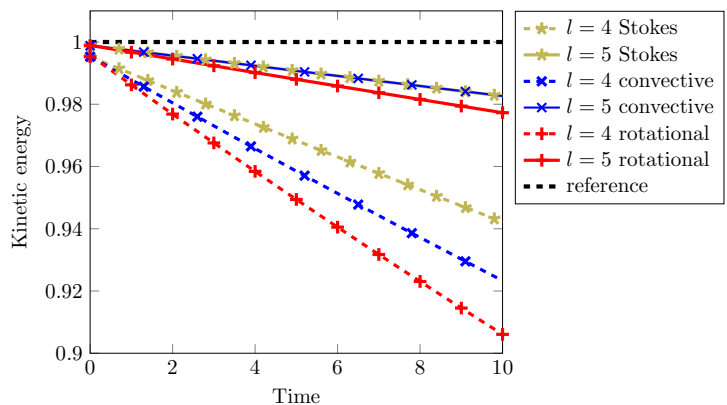
\begin{figure}
\begin{center}
	
	\begin{tikzpicture}[scale=0.8]
	
	\begin{axis}[xlabel={Time}, ylabel={Kinetic energy}, ymin=0.9, ymax=1.01, xmin=0, xmax=10, legend style={ cells={anchor=west}, legend pos=outer north east} ]
	
	\addplot[black!30!yellow,dashed,mark=star,mark size=3pt,
	mark repeat={7},mark options={solid},line width=1.5pt]
	table[x=Time, y=Kinetic]
	{./stability_l=4_mu=1_Graph_test16_no.txt};
	
		\addplot[black!30!yellow,solid,mark=star,mark size=3pt,
	mark repeat={7},  mark options={solid},line width=1.5pt]
	table[x=Time, y=Kinetic]
	{./eta=368.640747_l=5_nu=1_Plot_test16_none_BDF2=0.100000.txt};

	\addplot[blue,dashed,mark=x,mark size=3pt,mark options={solid},
	mark repeat={13},line width=1.5pt]
	table[x=Time, y=Kinetic]
	{./stability_l=4_mu=1_Graph_test16_full.txt};
	
	\addplot[blue,solid,mark=x,mark size=3pt,mark options={solid},
	mark repeat={13},line width=1pt]
	table[x=Time, y=Kinetic]
	{./eta=368.640747_l=5_nu=1_Plot_test16_convective_BDF2=0.100000.txt};

	\addplot[red,dashed,mark=+,mark size=3pt,mark options={solid},
	mark repeat={10},line width=1.5pt]
	table[x=Time, y=Kinetic]
	{./stability_l=4_mu=1_Graph_test16_rotation.txt};
	
	\addplot[red,solid,mark=+,mark size=3pt,mark options={solid},
	mark repeat={10},line width=1.5pt]
	table[x=Time, y=Kinetic]
	{./eta=368.640747_l=5_nu=1_Plot_test16_rotational_BDF2=0.100000.txt};
	
	\addplot[black,dashed,line width=2pt,samples=100,domain=0:10] {1};
	
	\legend{
		$l=4$  Stokes,
		$l=5$  Stokes,
		$l=4$  convective,
		$l=5$  convective,
		$l=4$  rotational,
		$l=5$  rotational,
		reference}
	
	\end{axis}
	\end{tikzpicture}
	\caption{Kinetic energy evolution for FE Killing vector field, $\nu=1$, $\Delta{}t=0.1$.}
	\label{nu=1_stability}
\end{center}
\end{figure}

Figures~\ref{nu=0_stability} and~\ref{nu=1_stability} show the kinetic energy history for the numerically simulated evolution of the
Killing vector field on the unit sphere. We set initial velocity equal to $P_1$ Lagrangian interpolant of $\bxi_z$ and run simulations for two refinement levels ($\ell=3$ and $\ell=4$) and two viscosity parameters ($\nu=0$ and $\nu=1$).
Experiment with $\nu=0$ illustrate the contribution of non-linear terms and corresponding pressure (through stabilization) to numerical energy dissipation, while experiment with  $\nu=1$ illustrate the contribution of numerical viscous stresses.
As should be expected, grid refinement lead to a rapid decrease of numerical diffusion in both cases. The results of computations without nonlinear term in Figures~\ref{nu=0_stability} (labeled by ``w/o nonlinear'') show that the volumetric stabilization and normal penalty alone do not produce any significant diffusion. The rotation form leads to more dissipative solution. The likely explanation is that the Bernoulli pressure contributes significantly to the discrete energy balance \eqref{balance_h} through the third term in the middle line.

\bibliographystyle{siam}
\bibliography{literatur}{}

\begin{thebibliography}{10}

\bibitem{barrett2014stable}
{\sc J.~W. Barrett, H.~Garcke, and R.~N{\"u}rnberg}, {\em A stable numerical
  method for the dynamics of fluidic membranes}, Numerische Mathematik, 134
  (2016), pp.~783--822.

\bibitem{baumgart2003imaging}
{\sc T.~Baumgart, S.~T. Hess, and W.~W. Webb}, {\em Imaging coexisting fluid
  domains in biomembrane models coupling curvature and line tension}, Nature,
  425 (2003), p.~821.

\bibitem{BP}
{\sc F.~Brezzi and J.~Pitk{\"a}ranta}, {\em On the Stabilization of Finite
  Element Approximations of the {S}tokes Equations}, Vieweg+Teubner Verlag,
  Wiesbaden, 1984, pp.~11--19.

\bibitem{burman2016cutb}
{\sc E.~Burman, P.~Hansbo, M.~G. Larson, and A.~Massing}, {\em Cut finite
  element methods for partial differential equations on embedded manifolds of
  arbitrary codimensions}, arXiv preprint arXiv:1610.01660,  (2016).

\bibitem{dickinson1999adsorbed}
{\sc E.~Dickinson}, {\em Adsorbed protein layers at fluid interfaces:
  interactions, structure and surface rheology}, Colloids and surfaces B:
  Biointerfaces, 15 (1999), pp.~161--176.

\bibitem{DROPS}
{\em {DROPS package}}.
\newblock \verb|http://www.igpm.rwth-aachen.de/DROPS/|.

\bibitem{fan2010hydrodynamic}
{\sc J.~Fan, T.~Han, and M.~Haataja}, {\em Hydrodynamic effects on spinodal
  decomposition kinetics in planar lipid bilayer membranes}, The Journal of
  chemical physics, 133 (2010), p.~12B604.

\bibitem{fries2017higher}
{\sc T.-P. Fries}, {\em Higher-order surface {FEM} for incompressible
  {N}avier-{S}tokes flows on manifolds}, arXiv preprint arXiv:1712.02520,
  (2017).

\bibitem{ganesan2008local}
{\sc S.~Ganesan, G.~Matthies, and L.~Tobiska}, {\em Local projection
  stabilization of equal order interpolation applied to the {S}tokes problem},
  Mathematics of Computation, 77 (2008), pp.~2039--2060.

\bibitem{grande2017higher}
{\sc J.~Grande, C.~Lehrenfeld, and A.~Reusken}, {\em Analysis of a high order
  trace finite element method for {PDEs} on level set surfaces}, IMA Journal of
  Numerical Analysis,  (2017).
\newblock To appear.

\bibitem{gross2017trace}
{\sc S.~Gro{\ss}, T.~Jankuhn, M.~A. Olshanskii, and A.~Reusken}, {\em A trace
  finite element method for vector-{L}aplacians on surfaces}, SIAM Journal on
  Numerical Analysis (to appear),  (2018).

\bibitem{GurtinMurdoch75}
{\sc M.~E. Gurtin and A.~I. Murdoch}, {\em A continuum theory of elastic
  material surfaces}, Archive for Rational Mechanics and Analysis, 57 (1975),
  pp.~291--323.

\bibitem{hansbo2016stabilized}
{\sc P.~Hansbo and M.~G. Larson}, {\em A stabilized finite element method for
  the {Darcy} problem on surfaces}, IMA Journal of Numerical Analysis,  (2016),
  p.~drw041.

\bibitem{hansbo2016analysis}
{\sc P.~Hansbo, M.~G. Larson, and K.~Larsson}, {\em Analysis of finite element
  methods for vector {Laplacians} on surfaces}, arXiv preprint
  arXiv:1610.06747,  (2016).

\bibitem{holst2012geometric}
{\sc M.~Holst and A.~Stern}, {\em Geometric variational crimes: Hilbert
  complexes, finite element exterior calculus, and problems on hypersurfaces.},
  Foundations of Computational Mathematics, 12 (2012).

\bibitem{Jankuhn1}
{\sc T.~Jankuhn, M.~A. Olshanskii, and A.~Reusken}, {\em Incompressible fluid
  problems on embedded surfaces: Modeling and variational formulations},
  Interfaces and Free Boundaries (to appear),  (2018).

\bibitem{Gigaetal}
{\sc H.~Koba, C.~Liu, and Y.~Giga}, {\em Energetic variational approaches for
  incompressible fluid systems on an evolving surface}, Quarterly of Applied
  Mathematics,  (2016).

\bibitem{ledoux2003improved}
{\sc M.~Ledoux}, {\em On improved {S}obolev embedding theorems}, Mathematical
  Research Letters, 10 (2003), pp.~659--670.

\bibitem{lehrenfeld2018stabilized}
{\sc C.~Lehrenfeld, M.~A. Olshanskii, and X.~Xu}, {\em A stabilized trace
  finite element method for partial differential equations on evolving
  surfaces}, SIAM Journal on Numerical Analysis (to appear),  (2018).

\bibitem{nitschke2017discrete}
{\sc I.~Nitschke, S.~Reuther, and A.~Voigt}, {\em Discrete exterior calculus
  ({DEC}) for the surface {N}avier-{S}tokes equation}, in Transport Processes
  at Fluidic Interfaces, Springer, 2017, pp.~177--197.

\bibitem{nitschke2012finite}
{\sc I.~Nitschke, A.~Voigt, and J.~Wensch}, {\em A finite element approach to
  incompressible two-phase flow on manifolds}, Journal of Fluid Mechanics, 708
  (2012), pp.~418--438.

\bibitem{olshanskii2018finite}
{\sc M.~A. Olshanskii, A.~Quaini, A.~Reusken, and V.~Yushutin}, {\em A finite
  element method for the surface {S}tokes problem}, SIAM Journal on Scientific
  Computing (to appear),  (2018).

\bibitem{olshanskii2017trace}
{\sc M.~A. Olshanskii and A.~Reusken}, {\em Trace finite element methods for
  {PDEs} on surfaces}, in Geometrically Unfitted Finite Element Methods and
  Applications, Springer, 2017, pp.~211--258.

\bibitem{ORG09}
{\sc M.~A. Olshanskii, A.~Reusken, and J.~Grande}, {\em A finite element method
  for elliptic equations on surfaces}, SIAM J. Numer. Anal., 47 (2009),
  pp.~3339--3358.

\bibitem{reusken2015analysis}
{\sc A.~Reusken}, {\em Analysis of trace finite element methods for surface
  partial differential equations}, IMA Journal of Numerical Analysis, 35
  (2015), pp.~1568--1590.

\bibitem{reuskenstream}
\leavevmode\vrule height 2pt depth -1.6pt width 23pt, {\em Stream function
  formulation of surface {S}tokes equations}, IGPM Aachen preprint 487,
  (2018).

\bibitem{reuther2015interplay}
{\sc S.~Reuther and A.~Voigt}, {\em The interplay of curvature and vortices in
  flow on curved surfaces}, Multiscale Modeling \& Simulation, 13 (2015),
  pp.~632--643.

\bibitem{reuther2018solving}
\leavevmode\vrule height 2pt depth -1.6pt width 23pt, {\em Solving the
  incompressible surface {N}avier-{S}tokes equation by surface finite
  elements}, Physics of Fluids, 30 (2018), p.~012107.

\bibitem{rodrigues2015semi}
{\sc D.~S. Rodrigues, R.~F. Ausas, F.~Mut, and G.~C. Buscaglia}, {\em A
  semi-implicit finite element method for viscous lipid membranes}, Journal of
  Computational Physics, 298 (2015), pp.~565--584.

\bibitem{sakai1996riemannian}
{\sc T.~Sakai}, {\em Riemannian geometry}, vol.~149, American Mathematical
  Soc., 1996.

\bibitem{scriven1960dynamics}
{\sc L.~Scriven}, {\em Dynamics of a fluid interface equation of motion for
  {N}ewtonian surface fluids}, Chemical Engineering Science, 12 (1960),
  pp.~98--108.

\bibitem{slattery2007interfacial}
{\sc J.~C. Slattery, L.~Sagis, and E.-S. Oh}, {\em Interfacial transport
  phenomena}, Springer Science \& Business Media, 2007.

\bibitem{Temam84}
{\sc R.~Temam}, {\em Navier-Stokes equations, theory and numerical analysis},
  North-Holland, Amsterdam, 3rd ed., 1984.

\bibitem{yavari2016nonlinear}
{\sc A.~Yavari, A.~Ozakin, and S.~Sadik}, {\em Nonlinear elasticity in a
  deforming ambient space}, Journal of Nonlinear Science, 26 (2016),
  pp.~1651--1692.

\end{thebibliography}
\end{document}